\title{Two Formulas for $F$-polynomials}
\author[Feiyang Lin]{Feiyang Lin$^*$}
\thanks{$*$~Department of Mathematics, University of California, Berkeley, CA 94720, USA. fylin@berkeley.edu}
\author[Gregg Musiker]{Gregg Musiker$^\dagger$}
\thanks{$\dagger$~ School of Mathematics, University of Minnesota, 206 Church St. SE, Minneapolis, MN, 55455 USA. musiker@math.umn.edu}
\author[Tomoki Nakanishi]{Tomoki Nakanishi$^\ddagger$}
\thanks{$\ddagger$~ Graduate School of Mathematics, Nagoya University, Furo-cho, Chikusa-ku, Nagoya, 464-8602, Japan. nakanisi@math.nagoya-u.ac.jp}
\begin{document}

\begin{abstract}
We discuss a product formula for $F$-polynomials in cluster algebras, and provide two proofs. One proof is inductive and uses only the mutation rule for $F$-polynomials. The other is based on the Fock-Goncharov decomposition of mutations. We conclude by expanding this product formula as a sum and illustrate applications.  This expansion provides an explicit combinatorial computation of $F$-polynomials in a given seed that depends only on the $\bc$-vectors and $\bg$-vectors along a finite sequence of mutations from the initial seed to the given seed.
\end{abstract}

\maketitle

\section{Introduction}

Cluster algebras are certain commutative algebras with a rich combinatorial
structure, first introduced in \cite{fomin2002cluster}. Their inception
was motivated by the study of semicanonical bases of Lie algebras; but
since then, researchers have made deep connections between cluster algebras
and many other areas of math and physics, including discrete dynamical
systems, Poisson geometry, higher Teichmüller spaces, commutative and
non-commutative algebraic geometry, string theory, and quiver representation
theory; see \cite{Keller2012} for specific references.

The generators of a cluster algebra, called cluster variables, can be naturally described using two pieces of data which were introduced in \cite{FominZelevinsky2007IV}, namely \emph{$F$-polynomials} and \emph{$\bg$-vectors}.  More detail is given in Section \ref{sec:background}. Thus, $F$-polynomials play a key part in the study and applications of cluster patterns. In this paper, we draw attention to a less-known product formula for $F$-polynomials applicable to all skew-symmetrizable cluster algebras (Theorem \ref{thm:main}) that we refer to as \emph{Gupta's formula}. Gupta's formula can also be expanded using the generalized binomial theorem into a summation formula indexed over integer tuples, giving an explicit formula for the coefficients of $F$-polynomials (Theorem \ref{thm:Gupta'sFormulaV2}). 

We begin this paper with some basics on cluster algebras, $F$-polynomials, $\bc$-vectors, and $\bg$-vectors in Sections \ref{sec:background1} and \ref{sec:background2}.  We then recall for the reader the Fock-Goncharov decomposition of mutations \cite{FockGoncharov09} in Section \ref{sec:FG-decomp} and reformulate mutations in terms of two automorphisms of the field of rational functions in the initial cluster variables in Section \ref{sec:Nontropical}.  The main theorem follows in Section \ref{sec:main} with two proofs in Sections \ref{sec:proof1} and \ref{sec:proof2}. These results hold for an arbitrary initial skew-symmetrizable matrix $B_{t_0}$. Finally, we conclude with a second manifestation of our main theorem in Section \ref{sec:Expansion}, this time as a formula involving an infinite sum, rather than as a product formula.

Let us give a brief account of the short history of Gupta's formula, justifying its name.
As part of her REU project \cite{Gupta2018} advised by the second author, Meghal Gupta  discovered both the product and summation versions of this $F$-polynomial formula, and gave an elementary proof for it, up to the use of sign-coherence.  She then used these two formulas to obtain explicit formulas for $F$-polynomials for cluster patterns associated to the $r$-Kronecker quiver, the affine $\cA_{n,1}$ quiver and Gale Robinson quivers, and resolved Eager and Franco's stabilization conjecture \cite[Sec. 9.5]{eager2012colored} for several special cases. 
Comparing with \cite{Gupta2018}, the reader will notice that our current formulation of Gupta’s formula is very different from the original formulation.  For example, instead of her usage of $a_{i,j}$, $b_{i,j}$, and $W(n,w_1,w_2,\dots,w_k)$ in \cite[Def. 2.15 and Def. 2.16]{Gupta2018}, we use dot products and sums involving more familiar cluster algebraic objects such as $\bc$-vectors and $\bg$-vectors.
Gupta's proof for the product version of her formula (Theorem \ref{thm:main}) is embedded in her proof towards the summation version (Theorem \ref{thm:Gupta'sFormulaV2}), but it proceeds by induction in essentially the same manner as the first proof we give, utilizing the same recurrence relation for $F$-polynomials as we do. In particular, Equation (2) from \cite[Sec. 3.2]{Gupta2018} translates to Theorem \ref{thm:Gupta'sFormulaV2} herein. Our second proof is dissimilar to her approach, relying on the Fock-Goncharov decomposition of mutations into a tropical and a non-tropical part.

\subsection*{Funding} This work was supported in part by the Japan Society for the Promotion of Science [16H03922];
and the National Science Foundation Research Training Group [DMS-174563].

\subsection*{Acknowledgements} The authors thank Bernhard Keller and Man-Wai Cheung for useful comments. It is inspired by Meghal Gupta's work in the 2018 REU at University of Minnesota in Combinatorics and Algebra, as well as the first author's work during the 2020 REU as well as her work supported by Harvey Mudd College.  We also thank the anonymous referees who provided numerous suggestions that have improved the exposition of our paper.

\section{Background}
\label{sec:background}
Before delving into the statement of Gupta's formula and its proofs, we will establish essential notation and provide some background on the Fock-Goncharov decomposition of mutations and related automorphisms, closely following \cite{Nakanishi2021}. We assume that the reader is familiar with the basics of cluster algebras, such as the definitions of $F$-polynomials, $\bc$-vectors, $\bg$-vectors in \cite{FominZelevinsky2007IV}, but begin by reviewing some of the salient points.

\subsection{The basics (Part 1)}
\label{sec:background1}
Let $\T_n$ be the $n$-regular tree graph where the $n$ edges attached to each vertex are distinctly labeled by $1, \dots, n$. 
We say that a pair of vertices $t$ and $t'$ in $\T_n$ are \textit{$k$-adjacent} if they are connected with an edge labeled by $k$. 

Let $\bbP=\mathrm{Trop}(\bfy)$ be the tropical semifield generated by $y_1, \dots, y_n$. The tropical semifield comes equipped with two binary operations, namely \emph{tropical addition} which is denoted as $\oplus$, and ordinary multiplication of Laurent monomials.  The tropical addition of two Laurent monomials in $\mathbb{P}$ is defined as 
\[y_1^{d_1}y_2^{d_2}\cdots y_n^{d_n} \oplus
y_1^{e_1}y_2^{e_2}\cdots y_n^{e_n} = 
y_1^{\min(d_1,e_1)}y_2^{\min(d_2,e_2)}\cdots y_n^{\min(d_n,e_n)}\]
for all integers $d_i$ and $e_i$ for $1 \leq i \leq n$.
It is straightforward to show that these two operations turn $\mathbb{P}$ into a semifield, meaning that the commutative, associative, and distributive laws are satisfied. Let $\bbZ\bbP$ be the group ring of the multiplicative group of $\bbP$ over $\bbZ$,
and let $\bbQ\bbP$ be its fraction field. 
In this paper, we consider cluster algebras of geometric type, meaning that their coefficients lie in $\bbP$. 

A cluster algebra (of rank $n$) is defined by an initial \emph{seed} with principal coefficients, which is a tuple $\Sigma_{t_0} = (\x_{t_0} , \y_{t_0} , B_{t_0})$ where $t_0$ is some vertex in $\T_n$, $\x_{t_0} = (x_1, \dots, x_n)$ is an $n$-tuple of algebraically independent elements in the rational field $\mathbb{QP}(\x_{t_0})$, $\y_{t_0} = (y_1, \dots, y_n)$ is the $n$-tuple of tropical generators of $\mathbb{P}$, and $B_{t_0}$ is the initial exchange matrix, which is an $n$-by-$n$ skew-symmetrizable integer matrix, meaning that there exists a diagonal matrix $D$ with positive entries such that $D B_{t_0} = -B_{t_0}^T D$. The \emph{principal extension} of the initial exchange matrix $B_{t_0}$ is denoted as $\widetilde{B_{t_0}}$ and it is the $2n$-by-$n$ matrix whose top $n$ rows is $B_{t_0}$ and whose bottom $n$ rows is an $n$-by-$n$ identity matrix. 

From the initial seed $\Sigma_{t_0}$, we obtain seeds $\Sigma_t$ for all $t \in \T_n$ by an iterative process known as \emph{mutation}: given two $k$-adjacent vertices $t, t' \in \T_n$, and a seed $\Sigma_t = (\x_t , \y_t , B_t)$ with extended exchange matrix $\widetilde{B_t}$, by \textit{mutating $\Sigma_t$ in direction $k$}, we obtain a seed $\Sigma_{t'} = (\x_{t'} , \y_{t'} , B_{t'})$ and an extended exchange matrix $\widetilde{B_{t'}}$. 
This assignment of a seed to each $t \in \T_n$ is a \emph{cluster pattern}, written $\bfSigma = \{\Sigma_t = (\x_t , \y_t , B_t)\}_{t \in \T_n}$. 
In this process, we also obtain two more families of matrices $C_t$ and $G_t$, known as $C$-, $G$-matrices, as well as a collection of polynomials in $\mathbb{Z}[y_1,\dots, y_n]$, known as $F$-polynomials. Write $A = (\mathbf{a}_i)_{i = 1}^n$ to mean that $\mathbf{a}_i$ is the $i$-th column of the $n$-by-$n$ matrix $A$. Then given a seed $\Sigma_t = (\x_t,\y_t,B_t) \in \bfSigma$, we write
\begin{align*}
\x_t &= (x_{1;t},\dots,x_{n;t}), \\
\y_t &= (y_{1;t},\dots,y_{n;t}), \\ 
B_t &= (\bb_{i; t})_{i = 1}^n = (b_{ij;t})_{i,j=1}^{n}, \\
C_t &= (\bc_{i; t})_{i = 1}^n = (c_{ij;t})_{i,j=1}^{n}, \\
G_t &= (\bg_{i; t})_{i = 1}^n = (g_{ij;t})_{i,j=1}^{n}.
\end{align*}

For the initial seed $\Sigma_{t_0} = (\x_{t_0} , \y_{t_0} , B_{t_0})$, we often drop the index $t_0$:
\begin{align*}
\x_{t_0} &= \x = (x_{1},\dots,x_{n}), \\
\y_{t_0} &= \y = (y_{1},\dots,y_{n}), \\
B_{t_0} &= B = (b_{ij})_{i,j=1}^n.
\end{align*}

Let us say a bit more about how $\x_t$, $\y_t$, $B_t$, $C_t$, $G_t$ and $F$-polynomials are obtained via mutations. Let $\mu_{k;t}$ denote the mutation map applied to the seed $\Sigma_t$ in direction $k$. To mutate the exchange matrix $B_t = (b_{ij; t})$, we write $[n]_+ = \max(n,0)$ and define $B_{t'} =
(b_{ij; t'})$ as follows: 
$$b_{ij;t'} = 
\begin{cases} -b_{ij; t} & \mathrm{~if~}i=k \mathrm{~or~} j = k \\
b_{ij;t} + b_{ik;t}[b_{kj;t}]_+ + [-b_{ik;t}]_+ b_{kj;t} & \mathrm{~otherwise.} 
\end{cases}$$

\noindent By applying mutation to $\widetilde{B_{t}}$, we obtain $\widetilde{B_{t'}}$ whose the top $n$ rows yield $B_{t'}$, and the bottom $n$ rows yield $C_{t'}$, the $C$-matrix associated to $\Sigma_{t'}$. Like our initial choice of $B_{t_0}$, each exchange matrix $B_t$ is skew-symmetrizable. We may take a common skew-symmetrizer $D = (d_1^{-1}, \dots, d_n^{-1})$, meaning that $D B_t = -B_t^T D$ for all $t \in \T_n$, and such that $d_1, \dots, d_n$ are positive integers. We fix such a $D$ from now on.
Note that the integers $d_i$’s match the ones in \cite{GrossHackingKeelKontsevich2018} in the scattering diagram formalism.

The mutation of $\y_t = (y_{1;t},\dots , y_{n;t})$, as elements of $\mathbb{P}$, is defined as
$$y_{i; t'} = \begin{cases}
y_{k;t}^{-1} & \mathrm{~if~}i=k \\
y_{i;t} y_{k;t}^{[b_{ki;t}]_+} (1 \oplus y_{k;t})^{-b_{ki;t}} & \mathrm{~if~}i \not = k. 
\end{cases}$$

A related set of auxiliary variables, the $\haty$-variables are defined by
\begin{align*}
\haty_{i;t}=y_{i;t}\prod_{j=1}^n x_{j;t}^{b_{ji;t}}.
\end{align*}
Again for the initial seed, we often drop the index $t_0$ and write
\[
\haty_{i;t_0} = \haty_{i} = y_{i}\prod_{j=1}^n x_{j}^{b_{ji}}.
\]

The effect of $\mu_{k;t}$ on $\x_t$ can be understood as an isomorphism of fields of rational functions. If we fix a choice of $\varepsilon \in \{-1,1\}$, 
then the map $\mu_{k;t}$ is defined as follows:
\begin{align}
\nonumber \mu_{k;t}: \Q\bP(\x_{t'}) \quad & \to \quad \Q\bP(\x_t) \\ 
\label{eq:mux0} x_{i;t'} \quad & \mapsto \quad
\begin{cases}
\frac{x_{k;t}^{-1} 
\left (\prod_{j=1}^n x_{j;t}^{[-\varepsilon b_{jk;t}]_+} \right )~ \left(1~+~\hy_{k;t}^{\varepsilon}\right)} {1~\oplus~ y_{k;t}^{\varepsilon}} & i = k, \\
x_{i;t} & i \neq k.
\end{cases}
\end{align}
It is easy to see that the definition is independent of the choice of $\varepsilon$. This map allows us to understand each $x_{i;t'}$ as an element of $\Q\bP(\x_{t})$. Repeated mutations give rise to a map $\Q\bP(\x_t) \to \Q\bP(\x_{t_0})$, which allows us to understand each $x_{i;t}$ as an element of $\Q\bP(\x_{t_0})$. Since $\mathbb{P} = \mathrm{Trop}(y_1,\dots,y_n)$, we may also view $x_{i;t}$ as a rational function (in fact a Laurent polynomial) in $\mathbb{Q}(\x_{t_0},\y_{t_0})$ such that only monomials in $\x_{t_0}$ appear in the denominator.  
Note that when iterating mutations, the order of the composition of the maps is opposite to the order of the mutations due to the domain and image of the map in \eqref{eq:mux0}.

We define the associated $F$-polynomials $F_{i;t} \in \mathbb{Z}[\y_{t_0}]$ as the specialization of $x_{i;t}|_{\x_{t_0}=1}$. Based on equation \eqref{eq:mux0}, i.e. see Prop. 5.1 of \cite{FominZelevinsky2007IV}, the $F$-polynomials satisfy the recurrence
\begin{align}
\label{eq:FPol}
F_{k;t'}&= F_{k;t}^{-1} \left( \frac{y_{k;t}}{1\oplus y_{k;t}}\prod_{j=1}^n F_{j;t}^{[b_{jk;t}]_+}
        + \frac{1}{1\oplus y_{k;t}}\prod_{j=1}^n F_{j;t}^{[-b_{jk;t}]_+}\right).
\end{align}

\noindent Furthermore, we obtain the $g$-vector $\bfg_{i;t}$ as the exponent vectors of the Laurent monomial arising from the specialization $x_{i;t}|_{\y_{t_0}=0}$.  
The $G$-matrix $G_t$ is defined by concatenating the $\bg$-vectors $\bfg_{1;t}$ through $\bfg_{n;t}$  together so that they form its columns.

\subsection{The basics (Part 2)}
\label{sec:background2}
We remind the reader of some well-known properties that will be useful later in our proofs.

\begin{proposition}[\cite{FominZelevinsky2007IV}, Prop. 6.6] \label{prop:gVectorRecurrence} Let $t', t \in \T_n$ be $k$-adjacent. Then $\bg_{i;t'} = \bg_{i;t}$ if $i \neq k$ and 
\begin{align*}
\bg_{k;t'} &= -\bg_{k;t} - \sum_{j=1}^{n} [c_{jk; t}]_+\bbb_{j;t_0} + \sum_{j=1}^{n}[b_{jk; t}]_+\bg_{j;t} 
\\ &= -\bg_{k;t} - \sum_{j=1}^{n}[-c_{jk;t}]_+ \bbb_{j;t_0} + \sum_{j=1}^{n}[-b_{jk; t}]_+\bg_{j;t}.
\end{align*}
\end{proposition}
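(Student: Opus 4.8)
This is Proposition 6.6 of \cite{FominZelevinsky2007IV}, and the proof I would give is the standard grading argument. Work in $\mathbb{Z}[x_1^{\pm 1},\dots,x_n^{\pm 1},y_1,\dots,y_n]$ (and its fraction field $\mathbb{Q}(\x_{t_0},\y_{t_0})$) with the $\mathbb{Z}^n$-grading determined by $\deg x_j = \mathbf{e}_j$ and $\deg y_j = -\bb_{j;t_0}$, so that $\deg \haty_{j} = -\bb_{j;t_0} + \sum_{i} b_{ij}\mathbf{e}_i = 0$. The structural input, proved by induction on the distance of $t$ from $t_0$ in $\T_n$, is: (a) each $x_{j;t}$ is homogeneous with $\deg x_{j;t} = \bg_{j;t}$; and (b) each $\haty_{j;t}$ is homogeneous of degree $0$. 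Granting (a)--(b), the recurrence is just a matter of reading off the degree of the right-hand side of \eqref{eq:mux0}: the equality $x_{k;t'}|_{\y_{t_0}=0} = \x_{t_0}^{\deg x_{k;t'}}$ holds because $x_{k;t'}$ is homogeneous and has a nonzero $\y_{t_0}$-free term (indeed $F_{k;t'}$ has constant term $1$, by a routine induction on \eqref{eq:FPol}), so $\bg_{k;t'} = \deg x_{k;t'}$, and similarly $\bg_{i;t'} = \deg x_{i;t'}$.

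The case $i \neq k$ is immediate: \eqref{eq:mux0} gives $x_{i;t'} = x_{i;t}$ verbatim, hence $\bg_{i;t'} = \bg_{i;t}$. For $i = k$, I rewrite \eqref{eq:mux0} using principal coefficients: since $y_{k;t} = \prod_{l} y_l^{c_{lk;t}}$ in $\bbP$, we have $1 \oplus y_{k;t}^{\varepsilon} = \prod_{l} y_l^{-[-\varepsilon c_{lk;t}]_+}$, and therefore
\begin{align*}
x_{k;t'} &= x_{k;t}^{-1}\left(\prod_{j=1}^n x_{j;t}^{[-\varepsilon b_{jk;t}]_+}\right)\left(1 + \haty_{k;t}^{\varepsilon}\right)\prod_{l=1}^n y_l^{[-\varepsilon c_{lk;t}]_+}, \\
\deg x_{k;t'} &= -\bg_{k;t} + \sum_{j=1}^n [-\varepsilon b_{jk;t}]_+\,\bg_{j;t} - \sum_{l=1}^n [-\varepsilon c_{lk;t}]_+\, \bb_{l;t_0},
\end{align*}
where the second line uses (a) on the $x_{j;t}$ factors and (b) on $1 + \haty_{k;t}^{\varepsilon}$ (a sum of two homogeneous elements of degree $0$). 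Taking $\varepsilon = 1$ yields the second displayed form of the proposition and $\varepsilon = -1$ the first; their equivalence is the identity $\sum_j b_{jk;t}\bg_{j;t} = \sum_l c_{lk;t}\bb_{l;t_0}$, i.e.\ column $k$ of $G_tB_t = B_{t_0}C_t$, which is exactly the content of (b) (it says $\deg \haty_{k;t} = \deg y_{k;t} + \sum_j b_{jk;t}\bg_{j;t} = -B_{t_0}\bc_{k;t} + G_t\bb_{k;t}$ vanishes).

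The real work is the induction establishing (a)--(b); everything else is bookkeeping with $[\,\cdot\,]_+$. The base case $t = t_0$ is clear from $G_{t_0} = I$ and the computation of $\deg\haty_j$ above. For the step to a vertex $t'$ that is $k$-adjacent to $t$ and farther from $t_0$: part (a) is trivial for $j \neq k$ and is the displayed degree computation for $j = k$; part (b) follows from (b) at $t$ via the standard fact that the $\haty$-variables obey the same mutation rule as the $\y$-variables, but evaluated in $\mathbb{Q}(\x_{t_0},\y_{t_0})$ instead of in $\bbP$ --- that rule writes each $\haty_{j;t'}$ as a subtraction-free product of powers of $\haty_{i;t}$ and of $1 + \haty_{k;t}$, all of degree $0$, hence $\deg\haty_{j;t'} = 0$. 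So the only subtlety is this interplay of homogeneity with $\haty$-mutation compatibility; a reader willing to import (a), or equivalently the matrix identity $G_tB_t = B_{t_0}C_t$, from \cite{FominZelevinsky2007IV} may skip the induction and keep only the one-line degree count.
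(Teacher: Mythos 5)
The paper does not prove this proposition at all --- it is imported from \cite{FominZelevinsky2007IV} --- and your argument is correct and is essentially the original one there: Proposition 6.1 of that paper establishes the homogeneity claims (a)--(b) with $\deg x_{j;t}=\bg_{j;t}$, and Proposition 6.6 is then exactly your degree count on the exchange relation, with the agreement of the two $\varepsilon$-forms encoding the first duality $G_tB_t=B_{t_0}C_t$. The one point worth flagging is that identifying $\deg x_{i;t}$ with the paper's definition of $\bg_{i;t}$ via $x_{i;t}|_{\y_{t_0}=0}$ requires $F_{i;t}$ to have a nonzero constant term, and your ``routine induction'' for that silently uses sign-coherence of the $\bc$-vectors (otherwise both monomials $\hat{\by}^{[\bc_{k;t}]_+}$ and $\hat{\by}^{[-\bc_{k;t}]_+}$ in the numerator of \eqref{eq:FPol} could vanish at $\y_{t_0}=0$); since the paper assumes sign-coherence anyway (Theorem \ref{thm:dualitiesSC}), this is harmless in context.
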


\begin{theorem} \label{thm:dualitiesSC} The following holds for any $t \in \T_n$.

(First duality. \cite{FominZelevinsky2007IV}, Eq. (6.14))
\begin{equation*} 
G_tB_t = B_{t_0} C_t;
\end{equation*}
(Second duality. \cite{NakanishiZelevinsky2012}, Eq. (3.11).)
\begin{equation*}
D^{-1}G^T_t D C_t = I;
\end{equation*}
(Sign-coherence. \cite{GrossHackingKeelKontsevich2018}, Cor. 5.5) for all $1 \leq k \leq n$, either $c_{ik; t} \geq 0$ for all $1 \leq i \leq n$, or $c_{ik; t} \leq 0$ for all $1 \leq i \leq n$. In other words, there exists $\varepsilon_{k; t} \in \{-1, 1\}$ such that $\bc_{k; t}^+ = \varepsilon_{k; t} \bc_{k;t}$ consists of only non-negative entries.
\end{theorem}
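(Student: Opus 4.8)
Since the three assertions have rather different characters, the plan is to treat them separately, in the order stated but with increasingly heavy tools.

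\emph{First duality.} I would prove $G_t B_t = B_{t_0} C_t$ by induction on the distance from $t_0$ in $\T_n$. The base case $t=t_0$ is immediate: $G_{t_0}=C_{t_0}=I$, so both sides equal $B_{t_0}$. For the inductive step, let $t'$ be $k$-adjacent to $t$ and one step farther from $t_0$, and assume $G_t B_t = B_{t_0} C_t$. I would compare the two sides of $G_{t'} B_{t'} = B_{t_0} C_{t'}$ one column at a time. For a column index $i \neq k$, the $B$-mutation rule, the matrix-mutation rule applied to the bottom block of $\widetilde{B_t}$ (which is how $C_t$ mutates), and Proposition~\ref{prop:gVectorRecurrence} each express the $i$-th column of the relevant matrix at $t'$ in terms of data at $t$ together with the $k$-th columns, so the identity for that column follows from the inductive hypothesis. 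The only genuine computation is the $k$-th column: one substitutes the $B$-mutation rule, the $C$-mutation rule, and Proposition~\ref{prop:gVectorRecurrence} for $\bg_{k;t'}$, expands, and checks equality using $G_t B_t = B_{t_0} C_t$. Proposition~\ref{prop:gVectorRecurrence} is conveniently recorded in both a ``$[\,\cdot\,]_+$'' and a ``$[-\,\cdot\,]_+$'' form, which is exactly what lets one match whichever sign branch of the $B$- and $C$-mutation rules one has landed in.

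\emph{Second duality.} Rewriting $D^{-1} G_t^T D C_t = I$ as $G_t^T D C_t = D$, I would again induct on $\T_n$: the base case is $I \cdot D \cdot I = D$, and the inductive step is a similar but longer verification that a single mutation $\mu_{k;t}$ preserves the relation, this time using the skew-symmetrizability identity $D B_t = -B_t^T D$ to cancel the cross terms arising from the $k$-th row and column. Depending on how one organizes the bookkeeping it is convenient to have sign-coherence of the $\bc$-vectors already in hand, so in practice one may prefer to establish sign-coherence first and then both dualities; alternatively, the second duality can be deduced from the first by passing to the cluster pattern with initial matrix $-B_{t_0}^T$, under which the $C$- and $G$-matrices are related to those of $B_{t_0}$ by transpose, inverse, and a $D$-twist.

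\emph{Sign-coherence.} This is the substantive statement and the main obstacle: it genuinely cannot be extracted from the mutation recurrences alone. In the skew-symmetric case I would invoke the representation-theoretic description of $\bc$-vectors due to Derksen--Weyman--Zelevinsky, in which the columns of $C_t$ are, up to a single global sign determined by the mutation sequence, dimension vectors of certain representations of the associated quiver with potential, and hence have entries all of one sign. For a general skew-symmetrizable $B_{t_0}$ the cleanest route is the cluster scattering diagram of Gross--Hacking--Keel--Kontsevich (the reference cited in the statement), where $\bc$-vectors occur as normal vectors to walls and sign-coherence follows from consistency of the scattering diagram; an unfolding argument gives an alternative in many cases. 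I would therefore take this third assertion as an external input rather than attempt an elementary proof, since no proof of it is known that uses only the mutation rules.
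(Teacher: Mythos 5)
The paper does not actually prove Theorem~\ref{thm:dualitiesSC}: all three assertions are imported verbatim from the literature, with the citations in the statement (\cite{FominZelevinsky2007IV} Eq.~(6.14), \cite{NakanishiZelevinsky2012} Eq.~(3.11), \cite{GrossHackingKeelKontsevich2018} Cor.~5.5) serving as the entire ``proof.'' Your proposal is therefore doing more than the paper does, and what you sketch is essentially the standard derivation: the first duality does follow by the column-by-column induction you describe, using the mutation rules for $B_t$ and for the bottom block of $\widetilde{B_t}$ together with Proposition~\ref{prop:gVectorRecurrence}; and you are right that sign-coherence is a genuinely deep external input (Derksen--Weyman--Zelevinsky in the skew-symmetric case, scattering diagrams in general) that cannot be extracted from the recurrences. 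The one point worth sharpening is the logical order around the second duality: in \cite{NakanishiZelevinsky2012} the identity $D^{-1}G_t^T D C_t = I$ is proved \emph{assuming} sign-coherence, and your proposed ``$-B_{t_0}^T$'' shortcut relies on transposition formulas that are themselves part of that same sign-coherence-dependent package, so calling sign-coherence merely ``convenient'' for the inductive bookkeeping understates the dependence --- the honest order is sign-coherence first, then both dualities. With that caveat your outline is correct and is consistent with how the paper treats the theorem, namely as a black box justified by the cited references.
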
 

The first duality motivates the following definition.
\begin{definition} \label{def:chat} Let $\hat{C}_t = G_tB_t = B_{t_0} C_t$, and let $\hat{C}_t = (\hat{\bc}_{j;t})_{j = 1}^n$. Let $\hat{\bc}_{j;t}^+ = \varepsilon_{j;t}\hat{\bc}_{j;t}$.
\end{definition} Contrary to the case of the $\bc^+$-vectors, as we will see in Example \ref{ex:B14}, we do not expect the entries of $\hat{\bc}_j^+$ to be all non-negative.

Additionally, the presence of the skew-symmetrizer $D$ in the second duality motivates us to consider the inner product $(\bu, \bv)_D = \bu^T D \bv$. We observe that $(\bu, \bv)_D = (\bv, \bu)_D$, and 
\begin{equation} \label{eq:Bantisym}
(\bu, B_t\bv)_D = \bu^T D B_t\bv = -\bu^T B_t^T D\bv = -(B_t \bu, \bv)_D
\end{equation}
for all $t \in \T_n$.

Using this inner product, the second duality can be equivalently stated as follows in terms of individual $\bg$-vectors and $\bc$-vectors: for $1 \leq i, j \leq n$ and any $t \in \T_n$,
\begin{align}\label{eq:secondDualityVecForm}
(\bg_{i; t}, d_j \bc_{j; t})_D = \delta_{i, j}.
\end{align}

Sign-Coherence allows us to rewrite Proposition \ref{prop:gVectorRecurrence} in a simpler form.
\begin{corollary}
\label{cor:gVectorRecurrence} Let $t', t \in \T_n$ be $k$-adjacent. Then $\bg_{i;t'} = \bg_{i;t}$ if $i \neq k$ and 
\begin{align*}
\bg_{k;t'} = -\bg_{k;t}  + \sum_{j=1}^{n}[-\epsilon_{k;t} b_{jk; t}]_+\bg_{j;t}.
\end{align*}
\end{corollary}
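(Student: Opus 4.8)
The plan is to deduce this directly from Proposition \ref{prop:gVectorRecurrence} together with the Sign-Coherence property recorded in Theorem \ref{thm:dualitiesSC}. Proposition \ref{prop:gVectorRecurrence} already supplies \emph{two} expressions for $\bg_{k;t'}$ — one involving $\sum_j [c_{jk;t}]_+ \bbb_{j;t_0}$ and one involving $\sum_j [-c_{jk;t}]_+ \bbb_{j;t_0}$ — and the point is that sign-coherence of the column $\bc_{k;t}$ forces exactly one of these two sums to vanish identically, leaving a formula with no $\bbb_{j;t_0}$-terms at all. The case $i \neq k$ needs no argument, since it is verbatim Proposition \ref{prop:gVectorRecurrence}.

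First I would recall from Theorem \ref{thm:dualitiesSC} that $\varepsilon_{k;t} \in \{-1,1\}$ is chosen so that $\varepsilon_{k;t} c_{jk;t} \ge 0$ for all $j$, and then split into two cases. If $\varepsilon_{k;t} = 1$, then $c_{jk;t} \ge 0$ for every $j$, so $[-c_{jk;t}]_+ = 0$ for every $j$, and the second expression in Proposition \ref{prop:gVectorRecurrence} collapses to $\bg_{k;t'} = -\bg_{k;t} + \sum_{j} [-b_{jk;t}]_+\, \bg_{j;t}$; this is the asserted formula because $[-\varepsilon_{k;t} b_{jk;t}]_+ = [-b_{jk;t}]_+$ in this case. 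If instead $\varepsilon_{k;t} = -1$, then $c_{jk;t} \le 0$ for every $j$, so $[c_{jk;t}]_+ = 0$ for every $j$, and the first expression in Proposition \ref{prop:gVectorRecurrence} collapses to $\bg_{k;t'} = -\bg_{k;t} + \sum_{j} [b_{jk;t}]_+\, \bg_{j;t}$, which again matches since now $[-\varepsilon_{k;t} b_{jk;t}]_+ = [b_{jk;t}]_+$. Combining the two cases gives the claim.

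There is no real obstacle here; the only thing requiring care is the bookkeeping of signs — namely that $\varepsilon_{k;t} = 1$ must be paired with the \emph{second} of the two displayed identities in Proposition \ref{prop:gVectorRecurrence} (and $\varepsilon_{k;t} = -1$ with the first), since those are the choices that annihilate the $\bbb_{j;t_0}$-contribution. Everything else is just unwinding the definition of $[\,\cdot\,]_+$.
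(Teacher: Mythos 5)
Your proof is correct and is exactly the argument the paper leaves implicit: the corollary is stated without proof as an immediate consequence of Proposition \ref{prop:gVectorRecurrence} and sign-coherence, and your case split on $\varepsilon_{k;t}$ (pairing $\varepsilon_{k;t}=1$ with the second displayed identity and $\varepsilon_{k;t}=-1$ with the first, so that the $\bbb_{j;t_0}$-sum vanishes) is the intended reasoning. Nothing is missing.
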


\subsection{Fock-Goncharov decomposition for principal coeffcients}
\label{sec:FG-decomp}

Following \cite{Nakanishi2021}, which builds off \cite{FockGoncharov09} in the $\varepsilon=1$ case,
we consider the following decomposition of the map $\mu_{k;t}$:
\begin{align}
\label{2eq:decom1}
\mu_{k;t}=\rho_{k;t}\circ \tau_{k;t},
\end{align}
where the map $\tau_{k;t}$ is the following isomorphism,
\begin{align*}
&\tau_{k;t}:\bbQ\bbP(\bfx_{t'}) \rightarrow \bbQ\bbP(\bfx_t),
\\
&\tau_{k;t}(x_{i;t'})=
\begin{cases}
\displaystyle
x_{k;t}^{-1}\prod_{j=1}^n x_{j;t}^{[-\varepsilon_{k;t} b_{jk;t}]_+}
& i=k,
\\
x_{i;t}
&i\neq k,
\end{cases}
\end{align*}
while the map $\rho_{k;t}$ is the following automorphism,
\begin{align*}
&\rho_{k;t}: \bbQ\bbP(\bfx_{t}) \rightarrow \bbQ\bbP(\bfx_t),
\\
&\rho_{k;t}(x_{i;t})=
x_{i;t}(1+\hat{y}_{k;t}^{\varepsilon_{k;t}})^{-\delta_{i,k}}.
\end{align*}

We call the decomposition \eqref{2eq:decom1} the {\em Fock-Goncharov decomposition\/}
of a mutation ${\mu}_{k;t}$ ({\em with tropical sign}) with respect to the initial vertex $t_0$.
We also call $\tau_{k;t}$ and $\rho_{k;t}$ the {\em tropical part\/}
and the {\em nontropical part\/} of ${\mu}_{k;t}$, respectively. See Section 4 of \cite{Nakanishi2021} for more information.  Comparing this decomposition with our definition of $\mu_{k;t}$ in \eqref{eq:mux0}, we note that here we are choosing $\epsilon$ to be the tropical sign $\epsilon_{k;t}$ and hence the denominator $1\oplus y_{k;t}^{\varepsilon}=1$.

The involution property of the mutations of cluster variables $\x_t$ and $\bg$-vectors $\bg_{i;t}$ is restated as follows.
\begin{prop}
\label{2prop:inv1}
Let $t,t'\in \bbT_n$ be vertices which are $k$-adjacent.
Then, the following relations hold.
\begin{align*}
\mu_{k;t'}\circ \mu_{k;t}&=\mathrm{id},
\\
\tau_{k;t'}\circ \tau_{k;t}&=\mathrm{id}.
\end{align*}
\end{prop}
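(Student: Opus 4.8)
The plan is to verify both identities by evaluating the two composites on the field generators $x_{1;t'},\dots,x_{n;t'}$: since $\tau_{k;t'}\circ\tau_{k;t}$ and $\mu_{k;t'}\circ\mu_{k;t}$ are $\bbQ\bbP$-algebra endomorphisms of $\bbQ\bbP(\bfx_{t'})$ (in particular they fix $\bbP$ pointwise), it suffices to show each fixes every $x_{i;t'}$. For $i\neq k$ this is immediate from the defining formulas: both $\tau_{k;t}$ and $\mu_{k;t}$ send $x_{i;t'}\mapsto x_{i;t}$, and $\tau_{k;t'}$, $\mu_{k;t'}$ then send $x_{i;t}\mapsto x_{i;t'}$. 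So only the case $i=k$ is substantive. The elementary inputs I will use, for $k$-adjacent $t,t'$, are $b_{jk;t'}=-b_{jk;t}$ for all $j$ (the matrix mutation rule for column $k$), $b_{kk;t}=0$ (skew-symmetrizability), $y_{k;t'}=y_{k;t}^{-1}$ (the tropical mutation of $\y_t$), and the sign relation $\varepsilon_{k;t'}=-\varepsilon_{k;t}$, which holds because $C$-matrix mutation negates column $k$, i.e.\ $\bc_{k;t'}=-\bc_{k;t}$.

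For $\tau_{k;t'}\circ\tau_{k;t}$ at $i=k$, I would abbreviate $a_j:=[-\varepsilon_{k;t}b_{jk;t}]_+$, noting $a_k=0$, so that $\tau_{k;t}(x_{k;t'})=x_{k;t}^{-1}\prod_{j\neq k}x_{j;t}^{a_j}$. The key observation is that these exponents are unchanged by the second tropical mutation, $[-\varepsilon_{k;t'}b_{jk;t'}]_+=[-(-\varepsilon_{k;t})(-b_{jk;t})]_+=a_j$, so
\[
\tau_{k;t'}\bigl(\tau_{k;t}(x_{k;t'})\bigr)=\Bigl(x_{k;t'}^{-1}\prod_{\ell\neq k}x_{\ell;t'}^{a_\ell}\Bigr)^{-1}\,\prod_{j\neq k}x_{j;t'}^{a_j}=x_{k;t'},
\]
which proves the second identity.

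For $\mu_{k;t'}\circ\mu_{k;t}$ I would first isolate the one non-formal step: $\mu_{k;t'}(\hat y_{k;t})=\hat y_{k;t'}^{-1}$. Indeed $\mu_{k;t'}$ fixes $\bbP$, and since $b_{kk;t}=0$ only the factors $\mu_{k;t'}(x_{j;t})=x_{j;t'}$ with $j\neq k$ survive, giving $\mu_{k;t'}\bigl(\prod_j x_{j;t}^{b_{jk;t}}\bigr)=\prod_{j\neq k}x_{j;t'}^{b_{jk;t}}=\prod_j x_{j;t'}^{-b_{jk;t'}}$; multiplying by $y_{k;t}=y_{k;t'}^{-1}$ yields $\hat y_{k;t'}^{-1}$. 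Next, using that \eqref{eq:mux0} is independent of the sign $\varepsilon$, I would evaluate $\mu_{k;t}$ with $\varepsilon=\varepsilon_{k;t}$ and $\mu_{k;t'}$ with $\varepsilon=-\varepsilon_{k;t}$, so that both $\oplus$-denominators collapse, $1\oplus y_{k;t}^{\varepsilon_{k;t}}=1\oplus y_{k;t'}^{-\varepsilon_{k;t}}=1$. Then $\mu_{k;t}(x_{k;t'})=x_{k;t}^{-1}\bigl(\prod_{j\neq k}x_{j;t}^{a_j}\bigr)\bigl(1+\hat y_{k;t}^{\varepsilon_{k;t}}\bigr)$, and applying $\mu_{k;t'}$---which sends $x_{j;t}\mapsto x_{j;t'}$ for $j\neq k$, sends $x_{k;t}\mapsto x_{k;t'}^{-1}\bigl(\prod_{\ell\neq k}x_{\ell;t'}^{a_\ell}\bigr)\bigl(1+\hat y_{k;t'}^{-\varepsilon_{k;t}}\bigr)$, and sends $\hat y_{k;t}^{\varepsilon_{k;t}}\mapsto\hat y_{k;t'}^{-\varepsilon_{k;t}}$ by the identity just proved---the monomials telescope exactly as in the $\tau$ case and the two factors $\bigl(1+\hat y_{k;t'}^{-\varepsilon_{k;t}}\bigr)^{\pm1}$ cancel, leaving $x_{k;t'}$. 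A tidier alternative, once $\tau_{k;t'}\circ\tau_{k;t}=\mathrm{id}$ is known, is to extract from the same $\hat y$-computation (together with the fact that $\rho_{k;t'}$ fixes $\hat y_{k;t'}$) the intertwining relation $\tau_{k;t'}\circ\rho_{k;t}=\rho_{k;t'}^{-1}\circ\tau_{k;t'}$, and then conclude $\mu_{k;t'}\circ\mu_{k;t}=\rho_{k;t'}\circ\tau_{k;t'}\circ\rho_{k;t}\circ\tau_{k;t}=\tau_{k;t'}\circ\tau_{k;t}=\mathrm{id}$ via \eqref{2eq:decom1}.

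The main obstacle is not any single calculation but the bookkeeping that reconciles the two descriptions of the seed data: $\mu_{k;t'}$ is an honest ring homomorphism that is the identity on $\bbP$, whereas the coefficients $y_{i;t}$ and the tropical signs $\varepsilon_{k;t}$ mutate by their own combinatorial rules. Getting the identity $\mu_{k;t'}(\hat y_{k;t})=\hat y_{k;t'}^{-1}$ right, and arranging the signs $\varepsilon$ in the two applications of \eqref{eq:mux0} so that the semifield denominators are trivial, is where the care is needed.
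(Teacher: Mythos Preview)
Your proof is correct and uses exactly the same inputs the paper singles out: for the second identity you rely on $b_{jk;t'}=-b_{jk;t}$ and $\varepsilon_{k;t'}=-\varepsilon_{k;t}$, which is precisely what the paper cites; for the first identity the paper simply invokes the well-known involution property of cluster mutations, whereas you unpack that involution explicitly. So the approach is essentially the same, with your version supplying the details that the paper leaves to the reader.
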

\begin{proof}
Here, the first equality 
is nothing but the involution of the mutations of $x$-variables.
The second equality 
simply relies on the two identities 
$\varepsilon_{k;t'}=-\varepsilon_{k;t}
\mathrm{~and~}
b_{jk;t'}=-b_{jk;t}$.
\end{proof}

For any $t\in \bbT_n$,
consider a sequence of vertices $t_0$, $t_1$, \dots, $t_{r+1}=t\in \bbT_n$
such that
they  are sequentially adjacent with edges labeled by $k_0$, \dots, $k_{r}$.
Then, we define isomorphisms
\begin{align*}
\mu_{t}^{t_0}&:=\mu_{k_0;t_0}\circ\mu_{k_1;t_1}\circ\dots \circ \mu_{k_{r};t_{r}}
:\bbQ\bbP(\bfx_{t}) \rightarrow \bbQ\bbP(\bfx),
\\
\tau_{t}^{t_0}&:=\tau_{k_0;t_0}\circ\tau_{k_1;t_1}\circ\dots \circ \tau_{k_{r};t_{r}}
:\bbQ\bbP(\bfx_{t}) \rightarrow \bbQ\bbP(\bfx),
\end{align*}
where we apply the automorphsims from right-to-left, and we have set $\bfx = \bfx_{t_0}$.
Thanks to Proposition \ref{2prop:inv1}, $\mu_{t}^{t_0}$ and
$\tau_{t}^{t_0}$ depend only on $t_0$ and $t$.
Namely, we do not have to care about any instances of the redundancy $k_{s+1}=k_s$
in the sequence $k_0,\dots,k_r$.

The following proposition  tells
that the tropical parts $\tau_{t}^{t_0}$ are nothing but the
mutations of the tropical parts of cluster variables ($\bg$-vectors).

\begin{prop}
\label{2prop:trop1}
The following formulas hold:
\begin{align}
\label{2eq:mux1}
\mu_{t}^{t_0}(x_{i;t})&=\x^{\bfg_{i;t}}F_{i;t}(\hat\bfy),\\
\label{2eq:tg1}
\tau_{t}^{t_0}(x_{i;t})&=\x^{\bfg_{i;t}},
\\
\label{2eq:tc1}
\tau_{t}^{t_0}(\haty_{i;t})&=\hat \bfy^{\bfc_{i;t}}.
\end{align}
\end{prop}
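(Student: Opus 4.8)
The plan is to prove the three formulas \eqref{2eq:mux1}, \eqref{2eq:tg1}, \eqref{2eq:tc1} simultaneously by induction on the distance $r$ from $t_0$ to $t$ in $\bbT_n$, using the Fock--Goncharov decomposition $\mu_{k;t}=\rho_{k;t}\circ\tau_{k;t}$ together with the fact that $\mu_t^{t_0}$ and $\tau_t^{t_0}$ are well defined (depending only on $t_0$ and $t$) by Proposition \ref{2prop:inv1}. The base case $t=t_0$ is immediate: $\mu_{t_0}^{t_0}=\tau_{t_0}^{t_0}=\mathrm{id}$, $\bg_{i;t_0}=\be_i$, $\bc_{i;t_0}=\be_i$, and $F_{i;t_0}=1$. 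For the inductive step I would take $t'$ one step farther from $t_0$ than $t$, with $t$ and $t'$ being $k$-adjacent, so that $\mu_{t'}^{t_0}=\mu_t^{t_0}\circ\mu_{k;t}$ and $\tau_{t'}^{t_0}=\tau_t^{t_0}\circ\tau_{k;t}$; the inductive hypothesis gives \eqref{2eq:mux1}--\eqref{2eq:tc1} for $t$, and I must deduce them for $t'$.

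I would begin with \eqref{2eq:tg1}. For $i\neq k$ we have $\tau_{k;t}(x_{i;t'})=x_{i;t}$, so $\tau_{t'}^{t_0}(x_{i;t'})=\tau_t^{t_0}(x_{i;t})=\x^{\bg_{i;t}}=\x^{\bg_{i;t'}}$ by the inductive hypothesis and Corollary \ref{cor:gVectorRecurrence} (which gives $\bg_{i;t'}=\bg_{i;t}$). For $i=k$,
\[
\tau_{t'}^{t_0}(x_{k;t'})=\tau_t^{t_0}\Bigl(x_{k;t}^{-1}\prod_{j=1}^n x_{j;t}^{[-\varepsilon_{k;t}b_{jk;t}]_+}\Bigr)=\x^{-\bg_{k;t}+\sum_j[-\varepsilon_{k;t}b_{jk;t}]_+\bg_{j;t}}=\x^{\bg_{k;t'}},
\]
again by Corollary \ref{cor:gVectorRecurrence}. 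Next, \eqref{2eq:tc1}: applying $\tau_t^{t_0}$ to the definition $\haty_{i;t'}=y_{i;t'}\prod_j x_{j;t'}^{b_{ji;t'}}$ and using that $\tau_t^{t_0}$ sends $x_{j;t'}$ to $\x^{\bg_{j;t'}}$ (which we just proved) reduces the claim $\tau_{t'}^{t_0}(\haty_{i;t'})=\hat\by^{\bc_{i;t'}}$ to a purely linear-algebraic identity among the matrices $G_{t'}$, $B_{t'}$, $C_{t'}$ and the initial data $G_t,B_t,C_t$; here I expect to invoke the $C$-matrix mutation rule together with the first duality $G_{t'}B_{t'}=B_{t_0}C_{t'}$ from Theorem \ref{thm:dualitiesSC}, essentially checking exponent vectors column by column and splitting into the $i=k$ and $i\neq k$ cases. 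Finally, for \eqref{2eq:mux1} I would write $\mu_{t'}^{t_0}(x_{i;t'})=\mu_t^{t_0}\bigl(\rho_{k;t}(\tau_{k;t}(x_{i;t'}))\bigr)$; since $\rho_{k;t}(x_{i;t})=x_{i;t}(1+\haty_{k;t}^{\varepsilon_{k;t}})^{-\delta_{ik}}$ and $\mu_t^{t_0}$ is a ring homomorphism, this becomes $\tau_t^{t_0}\bigl(\tau_{k;t}(x_{i;t'})\bigr)\cdot\mu_t^{t_0}\bigl((1+\haty_{k;t}^{\varepsilon_{k;t}})^{-\delta_{ik}}\bigr)$ after noting that $\mu_t^{t_0}$ restricted to the relevant subfield agrees with $\tau_t^{t_0}$ on the monomial part — more precisely, $\mu_t^{t_0}\circ\rho_{k;t}$ applied to $x_{i;t}$ equals $\mu_t^{t_0}(x_{i;t})$ divided by $\mu_t^{t_0}(1+\haty_{k;t}^{\varepsilon_{k;t}})^{\delta_{ik}}$. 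Using the inductive hypothesis $\mu_t^{t_0}(x_{i;t})=\x^{\bg_{i;t}}F_{i;t}(\hat\by)$ and $\mu_t^{t_0}(\haty_{k;t})=\hat\by^{\bc_{k;t}}$, one assembles $\mu_{t'}^{t_0}(x_{i;t'})=\x^{\bg_{i;t'}}F_{i;t'}(\hat\by)$ provided the $F$-polynomial recurrence \eqref{eq:FPol} and the $\bg$-vector recurrence match up — this is where the bookkeeping of the sign $\varepsilon_{k;t}$ and the relation $\haty_{k;t'}=\mu$-image computations come together.

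The main obstacle I anticipate is the $i=k$ case of \eqref{2eq:mux1}: one must show that $\tau_{t'}^{t_0}(\tau_{k;t}^{-1}\text{-adjusted monomial})$ times $\mu_t^{t_0}\bigl((1+\haty_{k;t}^{\varepsilon_{k;t}})^{-1}\bigr)=\x^{\bg_{k;t}-\sum[-\varepsilon b_{jk}]_+\bg_{j;t}}\cdot(1+\hat\by^{\varepsilon_{k;t}\bc_{k;t}})^{-1}$ can be rewritten, using $x_{k;t'}$'s defining mutation \eqref{eq:mux0} with $\varepsilon=\varepsilon_{k;t}$ and the $F$-polynomial recurrence \eqref{eq:FPol}, as $\x^{\bg_{k;t'}}F_{k;t'}(\hat\by)$; this requires carefully tracking how the tropical denominator $1\oplus y_{k;t}^{\varepsilon}=1$ interacts with the substitution $x_j\mapsto\x^{\bg_{j;t}}F_{j;t}(\hat\by)$, and verifying that the product $\prod_j\bigl(\x^{\bg_{j;t}}F_{j;t}(\hat\by)\bigr)^{[-\varepsilon_{k;t}b_{jk;t}]_+}$ contributes exactly the $\bg$-vector recurrence of Corollary \ref{cor:gVectorRecurrence} in the $\x$-exponent and exactly one of the two terms of \eqref{eq:FPol} in the $\hat\by$-part. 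The sign-coherence guarantee from Theorem \ref{thm:dualitiesSC}, which makes $\varepsilon_{k;t}$ well defined, is what allows the "$[\,\cdot\,]_+$'' expressions to collapse consistently; everything else is routine once the two recurrences are lined up.
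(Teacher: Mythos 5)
Your treatment of \eqref{2eq:tg1} and \eqref{2eq:tc1} is correct and essentially makes explicit what the paper does: it derives \eqref{2eq:tg1} from Corollary \ref{cor:gVectorRecurrence} (your induction is exactly that derivation) and \eqref{2eq:tc1} from \eqref{2eq:tg1}, the first duality $G_tB_t=B_{t_0}C_t$ of Theorem \ref{thm:dualitiesSC}, and the identity $y_{i;t}=\y^{\bc_{i;t}}$ in $\mathrm{Trop}(\y)$. For \eqref{2eq:mux1}, however, the paper simply cites the separation formula (Prop.\ 3.13 and Cor.\ 6.3 of Fomin--Zelevinsky IV), whereas you propose to reprove it inductively from the Fock--Goncharov decomposition; that is a legitimate, more self-contained route, but your sketch of it contains a concrete error.

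You invoke ``$\mu_t^{t_0}(\haty_{k;t})=\hat\by^{\bc_{k;t}}$'', but that identity holds for $\tau_t^{t_0}$, not for $\mu_t^{t_0}$. Since $\haty_{k;t}=y_{k;t}\prod_j x_{j;t}^{b_{jk;t}}$ and the inductive hypothesis gives $\mu_t^{t_0}(x_{j;t})=\x^{\bg_{j;t}}F_{j;t}(\hat\by)$, the correct statement is
\begin{equation*}
\mu_t^{t_0}(\haty_{k;t})=\hat\by^{\bc_{k;t}}\prod_{j=1}^n F_{j;t}(\hat\by)^{b_{jk;t}},
\end{equation*}
and the extra factor $\prod_j F_{j;t}^{b_{jk;t}}$ is indispensable: it is exactly what converts $\prod_j F_{j;t}^{[-\varepsilon_{k;t}b_{jk;t}]_+}$ into $\prod_j F_{j;t}^{[\varepsilon_{k;t}b_{jk;t}]_+}$ (via $[-\varepsilon b]_+ +\varepsilon b=[\varepsilon b]_+$), producing the second summand of the recurrence \eqref{eq:FPol}. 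Without it the two terms of \eqref{eq:FPol} cannot both appear. The same confusion of $\mu$ with $\tau$ underlies your claim that $\mu_t^{t_0}$ agrees with $\tau_t^{t_0}$ ``on the monomial part'': applying $\mu_t^{t_0}$ to the Laurent monomial $\tau_{k;t}(x_{k;t'})=x_{k;t}^{-1}\prod_j x_{j;t}^{[-\varepsilon_{k;t}b_{jk;t}]_+}$ yields not only $\x^{\bg_{k;t'}}$ but also $F_{k;t}^{-1}\prod_j F_{j;t}^{[-\varepsilon_{k;t}b_{jk;t}]_+}$, and these $F$-factors must be carried through rather than discarded. Finally, the exponent of $(1+\haty_{k;t}^{\varepsilon_{k;t}})$ in the $i=k$ case is $+1$, not $-1$: on a monomial $\x_t^{\bfm}$ the map $\rho_{k;t}$ multiplies by $(1+\haty_{k;t}^{\varepsilon_{k;t}})^{-m_k}$, and $\tau_{k;t}(x_{k;t'})$ has $m_k=-1$; consistently, the binomial in \eqref{eq:FPol} sits in the numerator. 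With these corrections your computation does close up into $\x^{\bg_{k;t'}}F_{k;t'}(\hat\by)$, but as written the pieces do not assemble into \eqref{eq:FPol}.
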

\begin{proof}
Equation \eqref{2eq:mux1} is the separation formula, i.e. Proposition 3.13 and Corollary 6.3 of \cite{FominZelevinsky2007IV}.
Note that the denominator of the separation formula is $1$ due to Prop 5.2 of \cite{FominZelevinsky2007IV} since we assume that our cluster pattern $\mathbf{\Sigma}$ has principal coefficients at the initial seed $t_0$.  We next note that \eqref{2eq:tg1} follows from Corollary \ref{cor:gVectorRecurrence}. Finally, \eqref{2eq:tc1} follows from \eqref{2eq:tg1} and Theorem \ref{thm:dualitiesSC}. See Proposition 4.3 of \cite{Nakanishi2021} for more details.
\end{proof}

\subsection{Nontropical parts and $F$-polynomials}
\label{sec:Nontropical}
Observing Proposition \ref{2prop:trop1}, it is clear that the nontropical parts $\rho_{k;t}$ are responsible for
generating and
mutating $F$-polynomials.
Let us make this statement more manifest, following \cite{Nakanishi2021}.

Let us introduce the following automorphisms $\frakq_{k;t}$ of $\bbQ\bbP(\bfx)$ for the initial $x$-variables $\bfx$.
\begin{align*}
\frakq_{k;t}:\bbQ\bbP(\bfx) &\rightarrow \bbQ\bbP(\bfx),
\\
\x^{\bfm} & \mapsto
\x^{\bfm}(1+
\hat{\y}^{\bfc_{k;t}^+}
)^{-(\bfm,d_k \bfc_{k;t})_D}, \text{ $\bfm \in \bbZ^n$}
,
\end{align*}
where $\haty_i$ are the initial $\haty$-variables. Recall that $B = B_{t_0}$. For $\bfn \in \Z^n$, we have
\begin{align*}
\frakq_{k;t}(
\hat \y^{\bfn}
)
= \ & \frakq_{k;t}(
\y^{\bfn}\x^{B\bfn}
) \\ 
= \ & \y^{\bfn}\x^{B\bfn}
(1+\hat \y^{{\bfc}_{k;t}^+}
)^{-(B\bfn,d_k \bfc_{k;t})_D} \\ 
= \ & \hat \y^{\bfn}
(1+\hat \y^{{\bfc}_{k;t}^+}
)^{(\bfn,d_k B\bfc_{k;t})_D},
\end{align*}
where we have used \eqref{eq:Bantisym} at the last equality. But $\hat \bfc_{k;t} = B\bfc_{k;t}$, so
\begin{align}
\label{eq:qAutOnYhat}
\frakq_{k;t}(
\hat \y^{\bfn}
)
&=\hat \y^{\bfn}
(1+\hat \y^{{\bfc}_{k;t}^+}
)^{(\bfn,d_k \hat\bfc_{k;t})_D}.
\end{align}
 
One can easily confirm the following properties.
\begin{prop}[\cite{Nakanishi2021}, Prop. 4.5]
\label{2prop:taurho1}
The following facts hold:
\par
(a). We have the formula
\begin{align*}
&\frakq_{k;t}(\x^{\bfg_{i;t}})=
\x^{\bfg_{i;t}}(1+
\hat{\y}^{\bfc_{k;t}^+}
)^{-\delta_{i,k}}.
\end{align*}
\par
(b). The following relation holds:
\begin{align*}
\tau^{t_0}_t \circ \rho_{k;t}
= \frakq_{k;t}\circ \tau^{t_0}_t.
\end{align*}
\par
(c). If $t'$ and $t$ are $k$-adjacent, we have
\begin{align*}
\frakq_{k;t'}
\circ
\frakq_{k;t}
=\mathrm{id}.
\end{align*}
\end{prop}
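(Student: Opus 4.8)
The plan is to treat the three parts in turn, obtaining (a) and (b) by unwinding definitions and (c) by a short sign computation. For (a), I substitute $\bfm=\bfg_{i;t}$ into the defining formula $\frakq_{k;t}(\x^{\bfm})=\x^{\bfm}(1+\hat{\y}^{\bfc_{k;t}^+})^{-(\bfm,d_k\bfc_{k;t})_D}$; the exponent becomes $-(\bfg_{i;t},d_k\bfc_{k;t})_D=-\delta_{ik}$ by the vector form \eqref{eq:secondDualityVecForm} of the second duality, which is precisely the claimed identity.

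For (b), both sides are field isomorphisms $\bbQ\bbP(\bfx_t)\to\bbQ\bbP(\bfx)$, so it suffices to check that they agree on the generators $x_{i;t}$. On the left, since $\tau^{t_0}_t$ is multiplicative and fixes $\bbP$ pointwise,
\[
\tau^{t_0}_t\bigl(\rho_{k;t}(x_{i;t})\bigr)=\tau^{t_0}_t(x_{i;t})\,\bigl(1+\tau^{t_0}_t(\haty_{k;t})^{\varepsilon_{k;t}}\bigr)^{-\delta_{ik}}=\x^{\bfg_{i;t}}\bigl(1+\hat{\y}^{\varepsilon_{k;t}\bfc_{k;t}}\bigr)^{-\delta_{ik}}
\]
by \eqref{2eq:tg1} and \eqref{2eq:tc1}, and $\varepsilon_{k;t}\bfc_{k;t}=\bfc_{k;t}^+$ by the definition of $\bfc_{k;t}^+$. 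On the right, part (a) gives $\frakq_{k;t}\bigl(\tau^{t_0}_t(x_{i;t})\bigr)=\frakq_{k;t}(\x^{\bfg_{i;t}})=\x^{\bfg_{i;t}}(1+\hat{\y}^{\bfc_{k;t}^+})^{-\delta_{ik}}$. The two expressions coincide, proving (b).

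For (c), let $t,t'$ be $k$-adjacent. The mutation rule for the extended matrix $\widetilde{B_t}$ negates its $k$-th column, so $\bfc_{k;t'}=-\bfc_{k;t}$; hence $\varepsilon_{k;t'}=-\varepsilon_{k;t}$, $\bfc_{k;t'}^+=\bfc_{k;t}^+$, and $\hat\bfc_{k;t'}=B\bfc_{k;t'}=-\hat\bfc_{k;t}$. Also $(\bu,B\bu)_D=0$ for every $\bu$, since $(\cdot,\cdot)_D$ is symmetric while \eqref{eq:Bantisym} forces $(\bu,B\bu)_D=-(\bu,B\bu)_D$. Hence by \eqref{eq:qAutOnYhat} applied at $t'$ the map $\frakq_{k;t'}$ fixes $\hat{\y}^{\bfc_{k;t'}^+}=\hat{\y}^{\bfc_{k;t}^+}$ (its exponent $(\bfc_{k;t'}^+,d_k\hat\bfc_{k;t'})_D$ is a multiple of $(\bfc_{k;t'},B\bfc_{k;t'})_D=0$), and so fixes $1+\hat{\y}^{\bfc_{k;t}^+}$; while the definition of $\frakq_{k;t'}$ together with $\bfc_{k;t'}=-\bfc_{k;t}$ gives $\frakq_{k;t'}(\x^{\bfm})=\x^{\bfm}(1+\hat{\y}^{\bfc_{k;t}^+})^{(\bfm,d_k\bfc_{k;t})_D}$. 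Applying the ring homomorphism $\frakq_{k;t'}$ to $\frakq_{k;t}(\x^{\bfm})=\x^{\bfm}(1+\hat{\y}^{\bfc_{k;t}^+})^{-(\bfm,d_k\bfc_{k;t})_D}$, the two powers of $1+\hat{\y}^{\bfc_{k;t}^+}$ cancel and only $\x^{\bfm}$ remains; since the monomials $\x^{\bfm}$ generate $\bbQ\bbP(\bfx)$ this gives $\frakq_{k;t'}\circ\frakq_{k;t}=\mathrm{id}$.

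I expect the only point needing care to be this sign bookkeeping in (c): although the tropical sign $\varepsilon_{k;t}$ flips under $k$-mutation, so does $\bfc_{k;t}$, so $\bfc_{k;t}^+$ is unchanged and the two correction factors cancel rather than reinforce one another. Alternatively, (c) can be obtained formally from (b): writing $\frakq_{k;t}=\tau^{t_0}_t\circ\rho_{k;t}\circ(\tau^{t_0}_t)^{-1}$ and $\tau^{t_0}_{t'}=\tau^{t_0}_t\circ\tau_{k;t}$ and expanding, $\frakq_{k;t'}\circ\frakq_{k;t}$ collapses to $\mathrm{id}$ using only the Fock--Goncharov decomposition $\mu_{k;t}=\rho_{k;t}\circ\tau_{k;t}$ and the involution identities $\mu_{k;t'}\circ\mu_{k;t}=\mathrm{id}$ and $\tau_{k;t'}\circ\tau_{k;t}=\mathrm{id}$ of Proposition \ref{2prop:inv1}.
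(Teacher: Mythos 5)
Your proposal is correct. The paper itself gives no proof of this proposition (it is stated as easily confirmed, with a citation to \cite{Nakanishi2021}, Prop.\ 4.5), and your direct verification is exactly the intended one: (a) is the second duality \eqref{eq:secondDualityVecForm} applied to the defining exponent, (b) follows by checking on the generators $x_{i;t}$ using \eqref{2eq:tg1} and \eqref{2eq:tc1}, and (c) reduces to the observations $\bfc_{k;t'}=-\bfc_{k;t}$, $\bfc_{k;t'}^+=\bfc_{k;t}^+$, and $(\bu,B\bu)_D=0$, so that $\frakq_{k;t'}$ fixes $1+\hat{\y}^{\bfc_{k;t}^+}$ and the two exponents cancel. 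Your alternative derivation of (c) from (b) via $\frakq_{k;t}=\tau^{t_0}_t\circ\rho_{k;t}\circ(\tau^{t_0}_t)^{-1}$ and the involutivity of $\mu_{k;t}$ and $\tau_{k;t}$ is also valid.
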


Similar to our definitions of $\mu_{t}^{t_0}$ and $\tau_{t}^{t_0}$, given any $t \in \T_n$, we consider a sequence of vertices $t_0$, $t_1$, \dots, $t_{r+1}=t\in \bbT_n$, which are sequentially adjacent by edges labelled by $k_0, \dots, k_r$. Then we define the automorphism
\begin{align*}
\frakq_{t}^{t_0}&:=
\frakq_{k_{0};t_{0}}
\circ\frakq_{k_1;t_1}\circ
\dots
\circ
\frakq_{k_{r};t_{r}}
:
\bbQ\bbP(\bfx) \rightarrow \bbQ\bbP(\bfx).
\end{align*}
Again, by Proposition \ref{2prop:taurho1} (c),
it depends only on $t_0$ and $t$.

One can  separate the tropical and the nontropical
parts of  $\mu^{t_0}_t $
as follows:
\begin{prop}[\cite{Nakanishi2021}, Prop. 4.6]
\label{2prop:sep3}
The following decomposition holds:
\begin{align*}
\mu^{t_0}_t 
= \frakq_t^{t_0}\circ \tau^{t_0}_t
:
\bbQ\bbP(\bfx_t) \rightarrow \bbQ\bbP(\bfx).
\end{align*}
\end{prop}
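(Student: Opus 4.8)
The plan is to establish the decomposition $\mu_t^{t_0} = \frakq_t^{t_0} \circ \tau_t^{t_0}$ by induction on the length $r+1$ of the path from $t_0$ to $t$ in $\T_n$, pushing each nontropical factor $\rho_{k_s;t_s}$ leftward past the tropical factors that precede it, using Proposition \ref{2prop:taurho1}(b) as the commutation relation that converts a $\rho$ into a $\frakq$. First I would record the base case: when $t = t_0$, both sides are the identity on $\bbQ\bbP(\bfx)$, which is immediate from the empty-composition conventions for $\mu_t^{t_0}$, $\tau_t^{t_0}$, and $\frakq_t^{t_0}$.

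For the inductive step, suppose $t_0, t_1, \dots, t_{r+1} = t$ is a path with edge labels $k_0, \dots, k_r$, and let $t' = t_r$ be the penultimate vertex, so that $t$ and $t'$ are $k_r$-adjacent. Writing $\mu_t^{t_0} = \mu_{t'}^{t_0} \circ \mu_{k_r;t_r}$ and applying the Fock–Goncharov decomposition \eqref{2eq:decom1}, namely $\mu_{k_r;t_r} = \rho_{k_r;t_r} \circ \tau_{k_r;t_r}$, I get
\[
\mu_t^{t_0} = \mu_{t'}^{t_0} \circ \rho_{k_r;t_r} \circ \tau_{k_r;t_r}.
\]
By the inductive hypothesis, $\mu_{t'}^{t_0} = \frakq_{t'}^{t_0} \circ \tau_{t'}^{t_0}$, so the right-hand side becomes $\frakq_{t'}^{t_0} \circ \tau_{t'}^{t_0} \circ \rho_{k_r;t_r} \circ \tau_{k_r;t_r}$. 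Now apply Proposition \ref{2prop:taurho1}(b) with $t = t_r$ to rewrite $\tau_{t'}^{t_0} \circ \rho_{k_r;t_r} = \frakq_{k_r;t_r} \circ \tau_{t'}^{t_0}$; here one must check that the $\tau_{t'}^{t_0}$ appearing in part (b)'s statement is exactly the composition $\tau_{k_0;t_0} \circ \cdots \circ \tau_{k_{r-1};t_{r-1}}$, which it is, since $t' = t_r$ and the path $t_0, \dots, t_r$ has edge labels $k_0, \dots, k_{r-1}$. This yields
\[
\mu_t^{t_0} = \frakq_{t'}^{t_0} \circ \frakq_{k_r;t_r} \circ \tau_{t'}^{t_0} \circ \tau_{k_r;t_r}
= \left(\frakq_{t'}^{t_0} \circ \frakq_{k_r;t_r}\right) \circ \left(\tau_{t'}^{t_0} \circ \tau_{k_r;t_r}\right)
= \frakq_t^{t_0} \circ \tau_t^{t_0},
\]
where the last equality is just the definitions of $\frakq_t^{t_0}$ and $\tau_t^{t_0}$ for the path of length $r+2$.

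The only subtlety — and the step I expect to require the most care — is the well-definedness bookkeeping: a priori $\mu_t^{t_0}$, $\tau_t^{t_0}$, and $\frakq_t^{t_0}$ are defined via a choice of path, and one must be sure that the path used on both sides of the induction is the same, and that the reduction $\mu_{t'}^{t_0} \circ \mu_{k_r;t_r}$ is legitimate. But Proposition \ref{2prop:inv1} (for $\mu$ and $\tau$) and Proposition \ref{2prop:taurho1}(c) (for $\frakq$) guarantee that all three operators depend only on $t_0$ and the endpoint, so any path may be chosen; in particular the above argument is independent of the path. Everything else is a formal manipulation of composable isomorphisms of $\bbQ\bbP(\bfx)$, so no genuine computation is needed beyond invoking the cited propositions. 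This matches Proposition 4.6 of \cite{Nakanishi2021}.
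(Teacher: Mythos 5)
Your proof is correct and is essentially the paper's own argument: both factor each $\mu_{k_s;t_s}$ via the Fock--Goncharov decomposition and repeatedly apply Proposition \ref{2prop:taurho1}(b) to convert each $\rho_{k_s;t_s}$ into $\frakq_{k_s;t_s}$, with well-definedness handled by Propositions \ref{2prop:inv1} and \ref{2prop:taurho1}(c). The only difference is organizational --- you run the induction by peeling off the last mutation, whereas the paper writes the same induction as a single telescoping chain sweeping from the initial seed outward.
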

\begin{proof}
This can be proved by the successive application of
Proposition \ref{2prop:taurho1} (b).
For simplicity, we omit the symbol $\circ$ for the composition of maps below.
First, we note that
\begin{align*}
&\rho_{k_0;t_0}(x_{i})=
x_{i}(1+\hat{y}_{k_0})^{-\delta_{i,k_0}}
=
\frakq_{k_0;t_0}(x_{i}).
\end{align*}
Then, by Proposition \ref{2prop:taurho1} we have (since $\rho_{k_0,t_0}=\frakq_{k_0;t_0}, ~~ \tau_{t_1}^{t_0} = \tau_{k_0;t_0}, ~~ \tau_{t_2}^{t_0} = \tau_{k_0;t_0}\tau_{k_1;t_1}$, etc.)
\begin{align*}
\mu_{t}^{t_0}&=\mu_{k_0;t_0}\mu_{k_1;t_1}\dots  \mu_{k_{r};t_{r}}\\
&=\rho_{k_0;t_0}\tau_{k_0;t_0}\rho_{k_1;t_1}\tau_{k_1;t_1}\dots 
\rho_{k_{r};t_{r}}\tau_{k_{r};t_{r}}\\
&=
\frakq_{k_0;t_0}
\tau_{t_1}^{t_0}\rho_{k_1;t_1}\tau_{k_1;t_1}\dots 
\rho_{k_{r};t_{r}}\tau_{k_{r};t_{r}}\\
&=
\frakq_{k_0;t_0}
\frakq_{k_1;t_1}
\tau_{t_1}^{t_0}
\tau_{k_1;t_1}\dots 
\rho_{k_{r};t_{r}}\tau_{k_{r};t_{r}}\\
&=
\frakq_{k_0;t_0}
\frakq_{k_1;t_1}
\tau_{t_2}^{t_0}
\rho_{k_2;t_2}
\dots 
\rho_{k_{r};t_{r}}\tau_{k_{r};t_{r}}\\
&= \cdots\\
&=
\frakq_{k_0;t_0}
\frakq_{k_1;t_1}\dots
\frakq_{k_{r};t_{r}}
\tau_{t}^{t_0}\\
&=
 \frakq_t^{t_0} \tau^{t_0}_t.
\end{align*}
where $t = t_{r+1}$.  We record this argument as a commutative diagram as illustrated below. 
\begin{align*}
\raisebox{70pt}
{
\begin{xy}
(0,0)*+{\bbQ\bbP(\bfx_t)}="aa";
(20,0)*+{\bbQ\bbP(\bfx_{t_{r}})}="ba";
(20,-12)*+{\bbQ\bbP(\bfx_{t_{r}})}="bb";
(40,-12)*+{\bbQ\bbP(\bfx_{t_{r-1}})}="cb";
(40,-24)*+{\bbQ\bbP(\bfx_{t_{2}})}="cc";
(60,-24)*+{\bbQ\bbP(\bfx_{t_{1}})}="dc";
(60,-36)*+{\bbQ\bbP(\bfx_{t_{1}})}="dd";
(80,0)*+{\bbQ\bbP(\bfx_{t_{0}})}="ea";
(80,-12)*+{\bbQ\bbP(\bfx_{t_{0}})}="eb";
(80,-24)*+{\bbQ\bbP(\bfx_{t_{0}})}="ec";
(80,-36)*+{\bbQ\bbP(\bfx_{t_{0}})}="ed";
(80,-48)*+{\bbQ\bbP(\bfx_{t_{0}})}="ee";
(10,3)*+{\text{\small $\tau_{k_r;t_r}$}};
(50,3)*+{\text{\small $\tau^{t_0}_{t_r}$}};
(60,-9)*+{\text{\small $\tau^{t_0}_{t_{r-1}}$}};
(30,-8)*+{\text{\small $\tau_{k_{r-1};t_{r-1}}$}};
(50,-21)*+{\text{\small $\tau_{k_{1};t_{1}}$}};
(70,-21)*+{\text{\small $\tau_{k_{0};t_{0}}$}};
(70,-33)*+{\text{\small $\tau_{k_{0};t_{0}}$}};
(4,-6)*+{\text{\small $\mu_{k_r;t_r}$}};
(44,-30)*+{\text{\small $\mu_{k_{1};t_{1}}$}};
(64,-42)*+{\text{\small $\mu_{k_0;t_0}$}};
(25,-5)*+{\text{\small $\rho_{k_r;t_r}$}};
(65,-29)*+{\text{\small $\rho_{k_{1};t_{1}}$}};
(86,-6)*+{\text{\small $\frakq_{k_r;t_r}$}};
(86,-30)*+{\text{\small $\frakq_{k_{1};t_{1}}$}};
(92,-42)*+{\text{\small $\rho_{k_0;t_0}=\frakq_{k_0;t_0}$}};
(80,-17)*+{\vdots};
(40,-17)*+{\vdots};
(30,-17)*+{\ddots};
{\ar "aa";"ba"};
{\ar "ba";"bb"};
{\ar "aa";"bb"};
{\ar "ba";"ea"};
{\ar "bb";"cb"};
{\ar "cb";"eb"};
{\ar "cc";"dc"};
{\ar "cc";"dd"};
{\ar "ea";"eb"};
{\ar "dc";"dd"};
{\ar "dc";"ec"};
{\ar "ec";"ed"};
{\ar "dd";"ed"};
{\ar "ed";"ee"};
{\ar "dd";"ee"};
\end{xy}
}
\end{align*}
\end{proof}

We conclude that the automorphisms $\frakq_t^{t_0}$ generate the nontropical parts of cluster variables (namely, $F$-polynomials) in the following manner.
 
\begin{thm}[\cite{Nakanishi2021}, Thm. 4.7] \label{thm:qaut}
 The following formula holds:
  \begin{align} \label{eq:qaut} 
 \frakq_t^{t_0}(\x^{\bfg_{i;t}})
 &=
 \x^{\bfg_{i;t}}
 F_{i;t}(\hat\bfy).
 \end{align}
 \end{thm}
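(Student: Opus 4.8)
The plan is to combine the separation formula for mutations (Proposition~\ref{2prop:trop1}, equation~\eqref{2eq:mux1}) with the tropical/nontropical decomposition of $\mu_t^{t_0}$ just established in Proposition~\ref{2prop:sep3}. On the one hand, \eqref{2eq:mux1} tells us that $\mu_t^{t_0}(x_{i;t}) = \x^{\bfg_{i;t}}F_{i;t}(\hat\bfy)$. On the other hand, Proposition~\ref{2prop:sep3} gives $\mu_t^{t_0} = \frakq_t^{t_0}\circ\tau_t^{t_0}$, so applying this to $x_{i;t}$ and using \eqref{2eq:tg1}, which says $\tau_t^{t_0}(x_{i;t}) = \x^{\bfg_{i;t}}$, we obtain
\begin{align*}
\mu_t^{t_0}(x_{i;t}) = \frakq_t^{t_0}\bigl(\tau_t^{t_0}(x_{i;t})\bigr) = \frakq_t^{t_0}(\x^{\bfg_{i;t}}).
\end{align*}
Equating the two expressions for $\mu_t^{t_0}(x_{i;t})$ immediately yields $\frakq_t^{t_0}(\x^{\bfg_{i;t}}) = \x^{\bfg_{i;t}}F_{i;t}(\hat\bfy)$, which is \eqref{eq:qaut}.

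Essentially all the substantive work has already been front-loaded into the preceding propositions, so the proof is a two-line assembly once those are in hand. The only point requiring a word of care is the identification of the codomains: $\frakq_t^{t_0}$ is an automorphism of $\bbQ\bbP(\bfx)$, $\tau_t^{t_0}$ maps $\bbQ\bbP(\bfx_t)\to\bbQ\bbP(\bfx)$, and $\mu_t^{t_0}$ maps $\bbQ\bbP(\bfx_t)\to\bbQ\bbP(\bfx)$, so the composite $\frakq_t^{t_0}\circ\tau_t^{t_0}$ has the right source and target and Proposition~\ref{2prop:sep3} legitimately applies to the element $x_{i;t}\in\bbQ\bbP(\bfx_t)$. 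One should also note that $F_{i;t}(\hat\bfy)$ makes sense as an element of $\bbQ\bbP(\bfx)$: since $F_{i;t}\in\bbZ[\bfy_{t_0}]$ and $\hat y_j = y_j\prod_k x_k^{b_{kj}}$, substituting the $\hat\bfy$-variables produces a genuine element of $\bbQ\bbP(\bfx)$, consistent with how the separation formula is phrased.

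If one prefers a self-contained argument not invoking Proposition~\ref{2prop:sep3} as a black box, the alternative is an induction on the length $r+1$ of the path from $t_0$ to $t$, using Proposition~\ref{2prop:taurho1}(b) ($\tau_t^{t_0}\circ\rho_{k;t} = \frakq_{k;t}\circ\tau_t^{t_0}$) to commute the nontropical factors past the tropical ones one at a time, exactly as in the displayed computation in the proof of Proposition~\ref{2prop:sep3}; but since that proposition is already available, the direct route above is cleanest. The main (and only real) obstacle is bookkeeping: making sure the order of composition in $\frakq_t^{t_0} = \frakq_{k_0;t_0}\circ\cdots\circ\frakq_{k_r;t_r}$ matches the order in $\mu_t^{t_0}$ and $\tau_t^{t_0}$, and that the path-independence guaranteed by Propositions~\ref{2prop:inv1} and~\ref{2prop:taurho1}(c) lets us speak of $\frakq_t^{t_0}$ and $\tau_t^{t_0}$ unambiguously — both of which have already been dispatched in the setup.
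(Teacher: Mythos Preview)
Your proof is correct and follows exactly the same route as the paper: combine the separation formula \eqref{2eq:mux1} with the decomposition $\mu_t^{t_0} = \frakq_t^{t_0}\circ\tau_t^{t_0}$ from Proposition~\ref{2prop:sep3} and the identity $\tau_t^{t_0}(x_{i;t}) = \x^{\bfg_{i;t}}$ from \eqref{2eq:tg1}. The paper's proof is the same two-line chain of equalities you wrote, just without the additional commentary on codomains and the inductive alternative.
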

 \begin{proof}
This follows from Propositions
\ref{2prop:trop1} and \ref{2prop:sep3} as
\begin{align*}
 \frakq_t^{t_0}(\x^{\bfg_{i;t}})
=
\frakq_t^{t_0}(\tau_{t}^{t_{0}}(x_{i;t}))
=
\mu_t^{t_0}(x_{i;t})
=
 \x^{\bfg_{i;t}}
 F_{i;t}(\hat\bfy).
\end{align*}
\end{proof}

\section{Statement of Gupta's Formula}
\label{sec:main} 
We are now ready to state the main theorem. 

\begin{theorem}[Gupta's Formula] \label{thm:main}
Given a mutation sequence $\mu_{i_1}\mu_{i_2}\dots$, let $t_j$ be the seed obtained by applying the mutations $\mu_{i_1}\mu_{i_2}\dots\mu_{i_j}$ to the initial seed $t_0$ in the cluster pattern defined by the initial exchange matrix $B_{t_0}$, starting with the application of $\mu_{i_1}$, and proceeding left-to-right.  Here, $B_{t_0}$ is an arbitrary initial skew-symmetrizable matrix.

Let \[ d_{(j)} = d_{i_j}, \quad \bc_{(j)} = \bc_{i_j; t_j}, \quad \bc_{(j)}^+ = \bc_{i_j; t_j}^+, \quad \hat{\bc}_{(j)}^+ = \hat{\bc}_{i_j; t_j}^+, \quad \bg_{(j)} = \bg_{i_j; t_j}, \quad z_j = \hat{\by}^{\bc_{(j)}^+}.\]
Then the $\ell$-th $F$-polynomial along the mutation sequence is 
\[F_{i_\ell; t_\ell}(\hat{\by}) = \prod_{j = 1}^{\ell} L_j^{(\bg_{(\ell)}, d_{(j)} \bc_{(j)})_D}  \text{ where } L_1 = 1+z_1, L_k = 1 + z_k\prod_{j = 1}^{k-1}L_j^{(\hat{\bc}_{(k)}^+, d_{(j)} \bc_{(j)})_D}.\]
\end{theorem}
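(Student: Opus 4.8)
The plan is to proceed by induction on $\ell$, using the automorphism $\frakq_t^{t_0}$ of Theorem~\ref{thm:qaut} as the central bookkeeping device. The key observation is that $\frakq_{t_\ell}^{t_0} = \frakq_{i_1;t_0}\circ\frakq_{i_2;t_1}\circ\cdots\circ\frakq_{i_\ell;t_{\ell-1}}$, so the product structure in Gupta's formula should match, factor by factor, the composition of the automorphisms $\frakq_{i_j;t_{j-1}}$. Wait---one must be careful here: the theorem statement defines $t_j$ as the seed after $j$ mutations, and writes $\bc_j = \bc_{i_j;t_j}$, $\bg_j = \bg_{i_j;t_j}$, so these are the $\bc$- and $\bg$-vectors \emph{at} the seed $t_j$ (the target of the $j$-th mutation). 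So the automorphism attached to the $j$-th step is $\frakq_{i_j;t_j}$, and indeed $\frakq_{t_\ell}^{t_0}$ is a composite of exactly these (indexed appropriately). I would set $P_\ell := \frakq_{t_\ell}^{t_0}(\x^{\bg_\ell})/\x^{\bg_\ell}$, which by Theorem~\ref{thm:qaut} equals $F_{i_\ell;t_\ell}(\hat\by)$, and aim to show $P_\ell = \prod_{j=1}^\ell L_j^{(\bg_\ell,d_{i_j}\bc_j)_D}$ with the $L_j$ defined as stated.

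The heart of the argument is to track what $\frakq_{k;t}$ does to a monomial $\x^\bm$: by definition it multiplies by $(1+\hat\y^{\bc_{k;t}^+})^{-(\bm,d_k\bc_{k;t})_D}$, and by \eqref{eq:qAutOnYhat} it sends $\hat\y^\bn \mapsto \hat\y^\bn(1+\hat\y^{\bc_{k;t}^+})^{(\bn,d_k\hat\bc_{k;t})_D}$. So applying the outermost factors of $\frakq_{t_\ell}^{t_0}$ successively to $\x^{\bg_\ell}$, each step introduces a new factor $(1 + (\text{previously built polynomial})\cdot z_j)$ raised to an appropriate power, where the previously built polynomial is precisely what multiplies $z_j = \hat\y^{\bc_j^+}$ after the inner automorphisms have acted on it. Concretely, I expect: the exponent on the outermost new factor is the pairing of $\bg_\ell$ (or $\hat\bc^+$, in the recursive definition of $L_k$) against $d_{i_j}\bc_j$ via $(\cdot,\cdot)_D$; and the argument $z_j\prod_{j'<j} L_{j'}^{(\hat\bc_j^+,d_{i_{j'}}\bc_{j'})_D}$ is exactly $\frakq$ applied to $z_j$, since \eqref{eq:qAutOnYhat} converts $\hat\y^{\bc_j^+}$ into $\hat\y^{\bc_j^+}$ times a product of lower $L$'s with exponents given by pairing $\hat\bc_j^+$ against the relevant $d\bc$'s (using $\hat\bc_j^+ = B\bc_j^+$ up to sign and \eqref{eq:Bantisym} to convert the $B\bc$ pairing into the stated form). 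This is the computation that has to be organized carefully; I would phrase it as: \emph{define $L_k^{(m)}$ to be the result of applying the inner $m$ automorphisms $\frakq_{i_1;t_1},\dots$ (the ones to the right) to $z_k$, show $L_k^{(k-1)} = L_k - 1$, and show that applying $\frakq_{i_j;t_j}$ to $\x^{\bg_\ell}$ for $j$ from large to small multiplies by $L_j$ raised to the right power.}

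Two technical points need attention. First, one must verify that the pairings appearing in \eqref{eq:qAutOnYhat} after repeated application genuinely telescope into $(\hat\bc_k^+, d_{i_j}\bc_j)_D$; this uses $\hat\bc_k = B\bc_k$ (Definition~\ref{def:chat}), $\hat\bc_k^+ = \varepsilon_{k;t}\hat\bc_k$, and the skew-symmetry identity \eqref{eq:Bantisym}, together with $\bc_k^+ = \varepsilon_{k;t}\bc_k$. Second, the exponent $(\bg_\ell, d_{i_j}\bc_j)_D$ in the final formula should be checked against the second duality \eqref{eq:secondDualityVecForm} for consistency (e.g., at $j=\ell$ it gives $(\bg_\ell, d_{i_\ell}\bc_\ell)_D = 1$, so $L_\ell$ appears linearly, matching the mutation recurrence \eqref{eq:FPol} for $F_{i_\ell;t_\ell}$ from $F_{i_\ell;t_{\ell-1}}$); this sanity check is also essentially the base case $\ell = 1$, where $F_{i_1;t_1}(\hat\by) = 1 + \hat y_{i_1}^{\varepsilon} = 1+z_1 = L_1$ since $\bc_1^+ = \varepsilon_{i_1;t_0}\be_{i_1}$.

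The main obstacle I anticipate is the bookkeeping in the inductive step: correctly identifying which $\bc$- and $\bg$-vectors (at which seed, with which sign) appear in each factor as the automorphisms are peeled off, and confirming that the nested product $\prod_{j<k}L_j^{(\hat\bc_k^+,d_{i_j}\bc_j)_D}$ really is the image of $z_k$ under the composite of the inner $\frakq$'s. I would prove this last claim as a separate lemma, by an auxiliary induction on the number of inner automorphisms, before assembling the final formula. Everything else---the base case, the skew-symmetry manipulations, the second-duality sanity check---is routine given the machinery in Section~\ref{sec:Nontropical}.
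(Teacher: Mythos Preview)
Your proposal is correct and follows essentially the same route as the paper's second proof (Section~\ref{sec:proof2}): the auxiliary lemma you describe---that the composite of the inner $\frakq$'s sends $z_k$ to $z_k\prod_{j<k} L_j^{(\hat\bc_k^+,d_{i_j}\bc_j)_D}$---is exactly \eqref{eq:lemmayhat}, and the final assembly is the telescoping product $\prod_j \frakq_{t_j}^{t_0}(\x^{\bg_\ell})/\frakq_{t_{j-1}}^{t_0}(\x^{\bg_\ell})$. The only bookkeeping to watch (which you already flag) is that the $j$-th factor in $\frakq_{t_\ell}^{t_0}$ is $\frakq_{i_j;t_{j-1}}$, not $\frakq_{i_j;t_j}$, and the sign flip $\bc_{i_j;t_{j-1}} = -\bc_j$ with $\bc_{i_j;t_{j-1}}^+ = \bc_j^+$ is what converts the exponent $-(\bg_\ell,d_{i_j}\bc_{i_j;t_{j-1}})_D$ from the definition of $\frakq$ into the $+(\bg_\ell,d_{i_j}\bc_j)_D$ appearing in the formula.
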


\begin{remark} The formula can be restated in a different manner using ordinary dot products. We write $C_t^{B; t_0}$ denote the $C$-matrix at seed $t$ in the cluster pattern defined by an initial exchange matrix $B$, when the choice of $B$ is not clear from the context. Let $\nye{C}_t = C^{-B_{t_0}^T; t_0}_{t} = (\tilde{\bc}_{i;t})_{i = 1}^n$, and let $\tilde{\bc}_{(j)} = \tilde{\bc}_{i_j; t_j}$.  Then
\[F_{i_\ell; t_\ell}(\hat{\by}) = \prod_{j = 1}^{\ell} L_j^{\tilde{\bc}_{(j)} ~\cdot~ \bg_{(\ell)}}  \text{ where } L_1 = 1+z_1, L_k = 1 + z_k\prod_{j = 1}^{k-1}L_j^{\tilde{\bc}_{(j)} ~\cdot~ \hat{\bc}_{(k)}^+}.\]
To see that these two formulations agree, we recall the following equality (\cite{NakanishiZelevinsky2012}, Eq. (2.7)): 
\[
C_{t}^{-B^T; t_0} = D C_{t}^{B; t_0} D^{-1}.
\]
It follows that $\tilde{\bc}_{(j)} = D d_{(j)} \bc_{(j)}$. Thus
\begin{align*}
(\bg_{(\ell)}, d_{(j)} \bc_{(j)})_D &= \bg_{(\ell)}^T Dd_{(j)} \bc_{(j)} = \bg_{(\ell)}^T\tilde{\bc}_{(j)} = \tilde{\bc}_{(j)} \cdot \bg_{(\ell)}, \\
(\hat{\bc}_{(k)}^+, d_{(j)} \bc_{(j)})_D 
&= \hat{\bc}_{(k)}^{+T} Dd_{(j)} \bc_{(j)} 
= \hat{\bc}_{(k)}^{+T} \tilde{\bc}_{(j)} = \tilde{\bc}_{(j)} \cdot \hat{\bc}_{(k)}^+.
\end{align*}
\end{remark}

\begin{example} \label{ex:B14}
Let $\mu = \mu_1 \mu_2 \mu_1$ and let 
\[
B_{t_0} = \mattwo{0}{1}{-4}{0}.
\]

Then 
\[
D = \mattwo{1}{0}{0}{\frac{1}{4}},
\]
and $d_{(1)} = d_1 = 1$, $d_{(2)} = d_2 = 4$, and $d_{(3)} = d_1 = 1$.
We will compute the $F$-polynomial $F_{1, t_3}(y_1, y_2)$.
The $\bc, ~~ \hat{\bc}^+, ~~ \bg$-vectors involved are as follows:
\begin{align*}
\bc_{(1)} &= \vectwo{-1}{0}, \quad
\bc_{(2)} = \vectwo{-1}{-1}, \quad
\bc_{(3)} = \vectwo{-3}{-4}, \\  
\hat{\bc}_{(1)}^+ &= \vectwo{0}{-4}, \quad
\hat{\bc}_{(2)}^+ = \vectwo{1}{-4}, \quad
\hat{\bc}_{(3)}^+ = \vectwo{4}{-12}, \\ 
\bg_{(1)} &= \vectwo{-1}{4}, \quad
\bg_{(2)} = \vectwo{-1}{3}, \quad
\bg_{(3)} = \vectwo{-3}{8}.
\end{align*}
So $z_1 = \hy_1$, $z_2 = \hy_1\hy_2$, $z_3 = \hy_1^3 \hy_2^4$. The relevant inner products are 
\begin{align*}
& (\bg_{(3)}, d_{(1)}\bc_{(1)})_D = 3, \quad
(\bg_{(3)}, d_{(2)}\bc_{(2)})_D = 4, \quad
(\bg_{(3)}, d_{(3)}\bc_{(3)})_D = 1, \\  
&(\hat{\bc}_{(2)}^+, d_{(1)} \bc_{(1)})_D = -1, \quad
(\hat{\bc}_{(3)}^+, d_{(1)} \bc_{(1)})_D = -4, \quad
(\hat{\bc}_{(3)}^+, d_{(2)} \bc_{(2)})_D = -4.
\end{align*}
Therefore, 
\begin{align*}
L_1 &= 1+z_1 = 1+\hy_1, \\  
L_2 
&= 1+z_2L_1^{(\hat{\bc}_{(2)}^+, d_{(1)} \bc_{(1)})_D} 
= 1+\hy_1\hy_2(1+\hy_1)^{-1}, \\ 
L_3 
&= 1+z_3L_1^{(\hat{\bc}_{(3)}^+, d_{(1)} \bc_{(1)})_D }L_2^{(\hat{\bc}_{(3)}^+, d_{(2)} \bc_{(2)})_D} \\
&= 1+\hy_1^3\hy_2^4(1+\hy_1)^{-4}(1+\hy_1\hy_2(1+\hy_1)^{-1})^{-4} \\ 
&= 1+\hy_1^3\hy_2^4(1+\hy_1+\hy_1\hy_2)^{-4}. 
\end{align*}
Applying Gupta's formula then gives us
\begin{align}
F_{1,t_3}(\hy_1, \hy_2) &= L_1^{(\bg_{(3)}, d_{(1)}\bc_{(1)})_D}L_2^{(\bg_{(3)}, d_{(2)}\bc_{(2)})_D} L_3^{(\bg_{(3)}, d_{(3)}\bc_{(3)})_D} \nonumber \\ 
&= L_1^{3}L_2^{4}L_3 \nonumber \\
&= (1+\hy_1)^3(1+\hy_1\hy_2(1+\hy_1)^{-1})^4(1+\hy_1^3\hy_2^4(1+\hy_1+\hy_1\hy_2)^{-4}) \nonumber \\
&= \frac{(1+\hy_1+\hy_1\hy_2)^4+\hy_1^3\hy_2^4}{1+\hy_1} \nonumber \\ 
&= 1 + 3 \hy_1 + 3 \hy_1^2 + \hy_1^3 + 4 \hy_1 \hy_2 + 8 \hy_1^2 \hy_2 + 4 \hy_1^3 \hy_2 + 
 6 \hy_1^2 \hy_2^2 + 6 \hy_1^3 \hy_2^2 + 4 \hy_1^3 \hy_2^3 + \hy_1^3 \hy_2^4. \label{eq:F3expanded}
\end{align}
\end{example}

\section{First Proof of Theorem \ref{thm:main}} \label{sec:proof1}

Recall that $[n]_+ = \max(n, 0)$. Given a vector $\bu = (u_1, \dots, u_n)$, let $[\bu]_+ = ([u_1]_+, \dots, [u_n]_+)$.

\begin{proof}
We shall prove the following formula for each $F$-polynomial at the seed $t_\ell$, which specializes to Theorem \ref{thm:main} when $i = i_{\ell}$:
\begin{equation} \label{eq:GuptaForAllFPolynomials}
F_{i; t_\ell} = \prod_{j = 1}^{\ell} L_j^{(\bg_{i; t_\ell}, d_{(j)} \bc_{(j)})_D} \text{ where } L_1 = 1+z_1, L_k = 1 + z_k\prod_{j = 1}^{k-1}L_j^{(\hat{\bc}_{(k)}^+, d_{(j)} \bc_{(j)})_D}.
\end{equation}

We proceed by induction on $\ell$. The base case is $\ell = 0$, where the formula above  reduces to the empty product, which we interpret to be $1$ by convention. Since we are at the initial seed, the $F$-polynomial for each cluster variable is just $1$. So the $F$-polynomials at $t_0$ agree with the formula.

Now suppose that the formula is correct for some $\ell \geq 0$, and let $k = i_{\ell+1}$, $t = t_\ell$ and $t' = t_{\ell+1}$. By \eqref{eq:secondDualityVecForm}, $(\bg_{i;t'}, d_k \bc_{(\ell+1)})_D = \delta_{i,k}$. (Note here that $d_k = d_{(\ell+1)}$.)  So if $i \neq k$, since $\bg_{i; t'} = \bg_{i; t}$, for all $1 \leq j \leq \ell$,
\begin{align*}
(\bg_{i;t'}, d_{(j)} \bc_{(j)})_D &= (\bg_{i;t}, d_{(j)} \bc_{(j)})_D,
\end{align*} 
and 
\[
(\bg_{i;t'}, d_k \bc_{(\ell+1)})_D = 0.
\]
Therefore, when $i \neq k$, 
\[F_{i;t'} = F_{i;t} = \prod_{j = 1}^{\ell} L_j^{(\bg_{i; t}, d_{(j)} \bc_{(j)})_D} = \prod_{j = 1}^{\ell+1} L_j^{(\bg_{i; t'}, d_{(j)} \bc_{(j)})_D}.\]
It remains to prove \eqref{eq:GuptaForAllFPolynomials} for $F_{k; t'}$. By the recurrence of $F$-polynomials, i.e. equation \eqref{eq:FPol}, we know that
\[F_{k; t'} = 
    \frac{
    \hat{\by}^{[\bc_{k;t}]_+} \prod_{i=1}^{n} F_{i;t}^{[b_{ik; t}]_+} + 
    \hat{\by}^{[-\bc_{k;t}]_+} \prod_{i=1}^{n} F_{i;t}^{[-b_{ik; t}]_+}
    }{F_{k; t}}.\]
Assuming that \eqref{eq:GuptaForAllFPolynomials} is true for $t_\ell$, we can substitute $F_{i;t}$ in the numerator and $F_{k;t}$ in the denominator by the appropriate products of the $L_j$'s, obtaining
\[
    F_{k; t'} = \hat{\by}^{[\bc_{k;t}]_+} \prod_{j = 1}^{\ell}L_j^{
    (\sum_{i=1}^{n}[b_{ik; t}]_+ \bg_{i;t} - \bg_{k;t}, \ 
    d_{(j)} \bc_{(j)})_D
    }
    + \hat{\by}^{[-\bc_{k;t}]_+} \prod_{j = 1}^{\ell}L_j^{
    (\sum_{i=1}^{n}[-b_{ik; t}]_+ \bg_{i;t} - \bg_{k;t},
    \ d_{(j)} \bc_{(j)})_D}
    ,
\]
where the $-\bg_{k;t}$ comes from $F_{k;t}$ in the denominator.

By Proposition \ref{prop:gVectorRecurrence}, 
\[\sum_{i=1}^{n}[b_{ik; t}]_+ \bg_{i;t} - \bg_{k;t} = \bg_{k;t'}+\sum_{j=1}^{n}[c_{jk; t}]_+ \bbb_{j;t_0} = \bg_{k;t'}+B_{t_0}[\bc_{k;t}]_+\]
and 
\[\sum_{i=1}^{n}[-b_{ik;t}]_+ \bg_{i;t} - \bg_{k;t} = \bg_{k;t'}+\sum_{j=1}^{n}[-c_{jk; t}]_+ \bbb_{j;t_0} = \bg_{k;t'}+B_{t_0}[-\bc_{k;t}]_+.\]
So 
\[F_{k; t'} =
\hat{\by}^{[\bc_{k;t}]_+} \prod_{j = 1}^{\ell}L_j^{
	(\bg_{k;t'}+B_{t_0}[\bc_{k;t}]_+, \ d_{(j)} \bc_{(j)})_D
    }
    + \hat{\by}^{[-\bc_{k;t}]_+} \prod_{j = 1}^{\ell}L_j^{
    (\bg_{k;t'}+B_{t_0}[-\bc_{k;t}]_+, \ d_{(j)} \bc_{(j)})_D
    }.
    \]
If $\varepsilon_{k;t} = 1$,
\[F_{k; t'} = \hat{\by}^{\bc_{k;t}} \prod_{j = 1}^{\ell}L_j^{
    (\bg_{k;t'}+B_{t_0}\bc_{k;t}, \ d_{(j)} \bc_{(j)})_D
    }
    + \prod_{j = 1}^{\ell}L_j^{
    (\bg_{k;t'}, \ d_{(j)} \bc_{(j)})_D};\]
and if $\varepsilon_{k;t} = -1$,
\[F_{k; t'} = \prod_{j = 1}^{\ell}L_j^{
    (\bg_{k;t'}, \ d_{(j)} \bc_{(j)})_D
    }
    + \hat{\by}^{-\bc_{k;t}} \prod_{j = 1}^{\ell}L_j^{
    (\bg_{k;t'}-B_{t_0}\bc_{k;t}, \ d_{(j)} \bc_{(j)})_D
    }.
    \]
We can combine these two cases:
\begin{align*}
F_{k; t'} &= \prod_{j = 1}^{\ell}L_j^{
    	(\bg_{k;t'}, \ d_{(j)} \bc_{(j)})_D
    }
    (
    	1 + \hat{\by}^{\bc_{k;t}^+} \prod_{j = 1}^{\ell}
    	L_j^{
    	(B_{t_0}\bc_{k;t}^+, d_{(j)} \bc_{(j)})_D}
    ) \\ 
    &= \prod_{j = 1}^{\ell}L_j^{
    (\bg_{k;t'}, \ d_{(j)} \bc_{(j)})_D}
    \left (1 + z_{\ell+1} \prod_{j = 1}^{\ell}
    L_j^{(\hat{\bc}_{k;t}^+, d_{(j)} \bc_{(j)})_D} \right ) \\ 
    &= \left(\prod_{j = 1}^{\ell}L_j^{
    (\bg_{k;t'}, \ d_{(j)} \bc_{(j)})_D}\right) L_{\ell+1}
\end{align*}

Recall that $(\bg_{i;t'}, d_k \bc_{(\ell+1)}) = \delta_{i, k}$, so $(\bg_{k; t'}, d_k \bc_{(\ell+1)})_D = 1$. Hence
\[F_{k; t'} = \prod_{j = 1}^{\ell+1}L_j^{
    (\bg_{k; t'}, d_{(j)} \bc_{(j)})_D}\] as desired.

\end{proof}

\section{Second Proof of Theorem \ref{thm:main}} \label{sec:proof2}
\begin{proof} 
By Theorem \ref{thm:qaut}, it suffices to show that
\[
\fq^{t_0}_{t_\ell}(\x^{\bg_{(\ell)}})
= \x^{\bg_{(\ell)}}
\prod_{j = 1}^{\ell} L_j^{(\bg_{(\ell)}, d_{(j)} \bc_{(j)})_D}.
\]

We first prove that for $0 \leq m \leq k-1$,
\begin{equation} \label{eq:lemmayhat}
\fq^{t_0}_{t_m}(z_k) = z_k\prod_{j=1}^{m}L_j^{(\hat{\bc}_{(k)}^+,d_{(j)} \bc_{(j)})_D}.
\end{equation} 

We prove \eqref{eq:lemmayhat} for a fixed $k$ by induction on $m$. The automorphism $\fq^{t_0}_{t_0}$ is the identity and the empty product is understood to be $1$, so the claim follows when $m = 0$. Now suppose that it is true for some $0 \leq m < k-1$. We wish to prove it for $m+1$. Recall from \eqref{eq:qAutOnYhat} that
\[
\fq_{i;t}(\hat{\by}^{\bn}) = \hat{\by}^{\bn}(1+\hat{\by}^{\bc_{i;t}^+})
^{(\bn, d_i \hat\bc_{i;t})_D}.
\]
Therefore 
\[
\fq_{i_{m+1}, t_{m}}(z_k) = 
z_k(1+\hat\y^{\bc_{i_{m+1}, t_{m}}^+})^{( \bc_{(k)}^+, d_{(m+1)} \hat{\bc}_{i_{m+1}; t_m})_D}.
\]
Since $\bc_{i_{m+1}, t_{m}} = -\bc_{i_{m+1}, t_{m+1}}$, we have $\bc_{i_{m+1}, t_{m}}^+ = \bc_{(m+1)}^+$ and $\hat{\bc}_{i_{m+1}; t_m} = -\hat{\bc}_{(m+1)}.$ So 
\begin{align*}
( \bc_{(k)}^+, d_{(m+1)} \hat{\bc}_{i_{m+1}; t_m})_D &= 
( \bc_{(k)}^+, - d_{(m+1)} B_{t_0} \bc_{(m+1)})_D \\ &= 
(B_{t_0}  \bc_{(k)}^+, d_{(m+1)}  \bc_{(m+1)})_D \\ &= 
(\hat{\bc}_{(k)}^+, d_{(m+1)}  \bc_{(m+1)})_D,
\end{align*}
which allows us to write
\[
\fq_{i_{m+1}, t_{m}}(z_k) =
z_k(1+z_{m+1})^{(\hat{\bc}_{(k)}^+, d_{(m+1)}  \bc_{(m+1)})_D}. 
\]
Using the induction hypothesis and substituting in our calculation of $\fq_{i_{m+1}, t_{m}}(z_k)$,
\begin{align*}
\fq^{t_0}_{t_{m+1}}(z_k) 
&= \fq^{t_0}_{t_{m}}(\fq_{i_{m+1}, t_{m}}(z_k)) \\ 
&= \fq^{t_0}_{t_{m}}
\left (
z_k(1+z_{m+1})^{(\hat{\bc}_{(k)}^+, d_{(m+1)}  \bc_{(m+1)})_D}
\right ) \\ 
&= z_k \prod_{j=1}^{m} L_j^{(\hat{\bc}_{(k)}^+,d_{(j)} \bc_{(j)})_D} 
\left(
	1+z_{m+1}
	\prod_{j=1}^{m}
	L_j^{(\hat{\bc}_{(m+1)}^+,d_{(j)} \bc_{(j)})_D}
\right)^{
	(\hat{\bc}_{{(k)}}^+, d_{(m+1)}\bc_{(m+1)})_D
  } \\ 
&= z_k\prod_{j=1}^{m+1} L_j^{(\hat{\bc}_{(k)}^+,d_{(j)} \bc_{(j)})_D} 
\end{align*}

This concludes the proof of \eqref{eq:lemmayhat}. With it, we compute that for all $1 \leq j \leq \ell$,
\begin{align*}
\frac{
\fq^{t_0}_{t_{j}}(\x^{\bg_{(\ell)}})
}
{
\fq^{t_0}_{t_{j-1}}(\x^{\bg_{(\ell)}})
}
&= \fq^{t_0}_{t_{j-1}}
\left (
\frac{\fq_{i_j, t_{j-1}}(\x^{\bg_{(\ell)}})}{\x^{\bg_{(\ell)}}}
\right ) \\
&= \fq^{t_0}_{t_{j-1}} 
\left (
(1+\hat{\by}^{\bc_{i_{j}, t_{j-1}}^+})^{-(\bg_{(\ell)}, d_{(j)}\bc_{i_j, t_{j-1}})_D} 
\right ) \\
&= \fq^{t_0}_{t_{j-1}} 
\left (
(1+z_j)^{(\bg_{(\ell)}, d_{(j)} \bc_{(j)})_D} 
\right ) \\ 
&= 
(1+\fq^{t_0}_{t_{j-1}} (z_j))^{(\bg_{(\ell)}, d_{(j)} \bc_{(j)})_D} 
\\ 
&= 
\left (
1+z_j\prod_{k=1}^{j-1}L_k^{(\hat{\bc}_{(j)}^+,d_{(k)} \bc_{(k)})_D}\right )^{(\bg_{(\ell)}, d_{(j)} \bc_{(j)})_D}  \\ 
&= L_j^{(\bg_{(\ell)}, d_{(j)} \bc_{(j)})_D}.
\end{align*}

It follows that
\begin{align*}
\frac
{\fq^{t_0}_{t_\ell}(\x^{\bg_{(\ell)}})}
{\x^{\bg_{(\ell)}}} 
&= 
\prod_{j = 1}^{\ell} 
\frac{
\fq^{t_0}_{t_{j}}(\x^{\bg_{(\ell)}})
}
{
\fq^{t_0}_{t_{j-1}}(\x^{\bg_{(\ell)}})
} \\ 
&= \prod_{j = 1}^{\ell} L_j^{(\bg_{(\ell)}, d_{(j)} \bc_{(j)})_D}
\end{align*}

as desired.

\end{proof}

\begin{remark}
\label{rem:KC}
 After Gupta's work \cite{Gupta2018} was posted on the arXiv, Man-Wai Cheung and Bernhard Keller separately reached out to Gupta and the second author, both pointing out that Gupta's formula was related to some known results in the literature.

 Keller pointed out that Gupta's formula can be obtained from Theorem 6.4 of  \cite{Keller2012}, which was first proven in \cite{Nagao2013}.
 Cheung noticed that after translating the relevant notation arising in scattering diagrams, Gupta's formula can be deduced from the formula for $F$-polynomials in terms of the path-ordered products of \cite{GrossHackingKeelKontsevich2018}.

In fact, in the skew-symmetric case,
the formula \eqref{eq:qaut} is  immediately obtained by setting $q=1$
in   Theorem 6.4 of \cite{Keller2012}, which is the quantum version of  the formula \eqref{eq:qaut}.
Also, the automorphism $ \frakq_t^{t_0}$ is identified with a path-ordered product in the corresponding scattering diagram in \cite{GrossHackingKeelKontsevich2018}.
Then,  the formula \eqref{eq:qaut} is immediately obtained from
Theorem 4.9 of \cite{GrossHackingKeelKontsevich2018}.
(See also \cite{reading2020combinatorial}, \cite{Nakanishi2021}.)
Thus, both approaches prove Gupta's formula via the formula \eqref{eq:qaut},
and the  proof presented here, which is based on  the formula \eqref{eq:qaut},
 should coincide with their suggestions,
 while we emphasize that we proved the formula \eqref{eq:qaut} without referring
to the quantum case or Theorem 4.9 of \cite{GrossHackingKeelKontsevich2018}.

Meanwhile, Gupta's formula itself and our proof of the formula \eqref{eq:qaut}
 crucially depend  on the sign-coherence property proved by \cite{GrossHackingKeelKontsevich2018}.
However, in \cite{Nakanishi2021}, it was also clarified that the sign-coherence is proved
without Theorem 4.9 of \cite{GrossHackingKeelKontsevich2018}.
Thus, our proof here together with the proof of the formula \eqref{eq:qaut} is still  in
a different perspective to the above ones.
\end{remark}

\section{Expansion of Gupta's Formula}
\label{sec:Expansion}
One may wish to expand the product formula given in Theorem \ref{thm:main} into a sum. To do so, we first prove a lemma.
\begin{lemma} \label{lem:ex3.2}
Given integers $h_1, \dots, h_\ell$ and the same setup as Theorem \ref{thm:main},
\[\prod_{j = 1}^{\ell} L_j^{h_j}
= \sum_{(m_1, \dots, m_\ell) \in \Z_{\geq 0}} \prod_{j=1}^{\ell}
\binom{h_j + \sum_{k=j+1}^{\ell} m_k (\hat \bc_{(k)}^+, d_{(j)} \bc_{(j)})_D}{m_j}
\hat{\by}^{\sum_{j=1}^{\ell}m_j \bc_{(j)}^+}.
\]
\end{lemma}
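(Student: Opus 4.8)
The plan is to prove the expansion by induction on $\ell$, peeling off the factor $L_\ell^{h_\ell}$ and recursing on the product $\prod_{j=1}^{\ell-1}$. The base case $\ell = 0$ (or $\ell = 1$) is just the generalized binomial theorem applied to $L_1^{h_1} = (1+z_1)^{h_1} = \sum_{m_1 \geq 0} \binom{h_1}{m_1} z_1^{m_1}$, and $z_1 = \hat{\by}^{\bc_1^+}$, so the claimed formula holds since the only nonempty inner-product correction term $\sum_{k=2}^1$ is empty.

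For the inductive step, I would first apply the generalized binomial theorem to the last factor only: $L_\ell^{h_\ell} = \bigl(1 + z_\ell \prod_{j=1}^{\ell-1} L_j^{(\hat\bc_\ell^+, d_{i_j}\bc_j)_D}\bigr)^{h_\ell} = \sum_{m_\ell \geq 0} \binom{h_\ell}{m_\ell} z_\ell^{m_\ell} \prod_{j=1}^{\ell-1} L_j^{m_\ell(\hat\bc_\ell^+, d_{i_j}\bc_j)_D}$. Multiplying by $\prod_{j=1}^{\ell-1} L_j^{h_j}$ and collecting exponents, the product over $j = 1, \dots, \ell-1$ becomes $\prod_{j=1}^{\ell-1} L_j^{h_j + m_\ell(\hat\bc_\ell^+, d_{i_j}\bc_j)_D}$. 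Now I apply the induction hypothesis to this product of $\ell - 1$ factors, but with the shifted exponents $h_j' := h_j + m_\ell(\hat\bc_\ell^+, d_{i_j}\bc_j)_D$ in place of $h_j$. This is the step where one must be careful: the induction hypothesis must be stated for \emph{arbitrary} integer exponents $h_1, \dots, h_{\ell-1}$ (which it is, as written in the lemma), so substituting the shifted values is legitimate.

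The remaining work is purely bookkeeping: after substitution, the summand over $(m_1, \dots, m_{\ell-1})$ carries binomial coefficients $\binom{h_j' + \sum_{k=j+1}^{\ell-1} m_k(\hat\bc_k^+, d_{i_j}\bc_j)_D}{m_j}$, and expanding $h_j' = h_j + m_\ell(\hat\bc_\ell^+, d_{i_j}\bc_j)_D$ turns the upper entry into exactly $h_j + \sum_{k=j+1}^{\ell} m_k(\hat\bc_k^+, d_{i_j}\bc_j)_D$ — i.e. the $\ell$-term sum absorbs the new index $m_\ell$. One checks that the $j = \ell$ factor is $\binom{h_\ell}{m_\ell} = \binom{h_\ell + \sum_{k=\ell+1}^\ell \cdots}{m_\ell}$ (empty sum), matching the claimed general form. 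Finally, the monomial contributions multiply as $\hat{\by}^{\sum_{j=1}^{\ell-1} m_j \bc_j^+} \cdot z_\ell^{m_\ell} = \hat{\by}^{\sum_{j=1}^{\ell} m_j \bc_j^+}$ since $z_\ell = \hat{\by}^{\bc_\ell^+}$, and the sum over $(m_1, \dots, m_{\ell-1}) \in \Z_{\geq 0}^{\ell-1}$ together with the sum over $m_\ell \geq 0$ assembles into the full sum over $(m_1, \dots, m_\ell) \in \Z_{\geq 0}^\ell$.

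The main obstacle — really the only subtle point — is making sure the algebraic manipulations with the binomial series are valid, i.e. that the infinite sums converge in the appropriate formal sense. Since the $\hat{\by}$-exponents $\bc_j^+$ are the (nonnegative-entry) $\bc^+$-vectors and, in a fixed mutation sequence, the monomials $\hat{\by}^{\sum m_j \bc_j^+}$ of bounded total degree come from only finitely many tuples $(m_1, \dots, m_\ell)$ (one should remark why — e.g. the $\bc_j^+$ span in a way that forces this, or simply note we work in the completion of $\Z[[\hat\by]]$ where these are summable), the rearrangements are justified. I would state this convergence/finiteness caveat once at the start and then treat the identity formally. Everything else is a routine induction, so I expect the write-up to be short.
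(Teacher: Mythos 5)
Your proposal is correct and follows essentially the same route as the paper: both expand one factor at a time via the generalized binomial theorem (starting with $L_\ell^{h_\ell}$) and absorb the resulting exponent shifts $m_\ell(\hat\bc_\ell^+, d_{i_j}\bc_j)_D$ into the remaining factors. The only difference is organizational --- the paper runs a downward induction on the starting index $p$ with an intermediate identity that carries residual powers of $L_1,\dots,L_{p-1}$, whereas you induct on $\ell$ and exploit the fact that the lemma holds for arbitrary integers $h_j$ to feed the shifted exponents into the induction hypothesis; the bookkeeping is identical.
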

\begin{proof}
We prove the following claim by induction: for all $1 \leq p \leq \ell$, 
\begin{equation} \label{eq:indHypothesis}
\prod_{j = p}^{\ell} L_j^{h_j}
= \sum_{(m_p, \dots, m_\ell) \in \Z_{\geq 0}} 
\prod_{j=1}^{p-1} L_j^{\sum_{k=p}^{\ell} m_k (\hat \bc_{(k)}^+, d_{(j)} \bc_{(j)})_D}
\prod_{j = p}^{\ell} \binom{h_j + \sum_{k=j+1}^{\ell} m_k (\hat \bc_{(k)}^+, d_{(j)} \bc_{(j)})_D}{m_j}
\hat{\by}^{\sum_{j=p}^{\ell}m_j \bc_{(j)}^+}.
\end{equation} When $p = 1$, this claim specializes to our lemma.

First note that by the Generalized Binomial Theorem, for any $h \in \Z$, we can write
\begin{align} 
    L_k^{h} &= \left(1 + z_k \prod_{j = 1}^{k-1} L_j^{(\hat \bc_{(k)}^+, d_{(j)} \bc_{(j)})_D} \right)^{h} \nonumber \\ 
    &= \sum_{m \in \Z_{\geq 0}} \binom{h}{m}z_k^{m} \prod_{j = 1}^{k-1} L_j^{m(\hat \bc_{(k)}^+, d_{(j)} \bc_{(j)})_D} \nonumber \\
    &= \sum_{m \in \Z_{\geq 0}} \prod_{j=1}^{k-1} L_j^{m(\hat \bc_{(k)}^+, d_{(j)} \bc_{(j)})_D} \binom{h}{m} \hat{\by}^{m  \bc_{(k)}^+}. \label{eq:LkExpansion}
\end{align}
If we let $k = \ell$ and $h = h_\ell$, the above is precisely the base case $p = \ell$ of our claim.

Now suppose that \eqref{eq:indHypothesis} is true for $p+1$. Multiplying both sides by $L_p^{h_p}$, we get
\begin{align}
\prod_{j = p}^{\ell} L_j^{h_j} 
=
\sum_{(m_{p+1}, \dots, m_\ell) \in \Z_{\geq 0}}
     \ & \bigg ( \ \prod_{j=1}^{p-1} L_j^{
    \sum_{k=p+1}^{\ell} m_k (\hat \bc_{(k)}^+, d_{(j)} \bc_{(j)})_D
    }
    L_p^{
    h_p + \sum_{k=p+1}^{\ell} m_k (\hat \bc_{(k)}^+, d_{(p)}\bc_{(p)})_D
    } \nonumber \\ 
     & \prod_{j = p+1}^{\ell} \binom{
    h_j + \sum_{k=j+1}^{\ell} m_k (\hat \bc_{(k)}^+, d_{(j)} \bc_{(j)})_D
    }{m_j}
    \hat{\by}^{\sum_{j=p+1}^{\ell}m_j \bc_{(j)}^+} \bigg ). \label{eq:indIntermediate}
\end{align}

Letting $k = p$ and $h = h_p + \sum_{k=p+1}^{\ell} m_k (\hat \bc_{(k)}^+, d_{(p)}\bc_{(p)})_D$, our computation in Eq. \eqref{eq:LkExpansion} allows us to expand the middle term of \ref{eq:indIntermediate}:
\[
L_p^{h_p + \sum_{k=p+1}^{\ell} m_k (\hat \bc_{(k)}^+, d_{{(p)}}\bc_{(p)})_D} 
= \sum_{m_p \in \Z_{\geq 0}} \prod_{j=1}^{p-1} L_j^{m_p(\hat\bc_{(p)}^+, d_{(j)} \bc_{(j)})_D} \binom{h_p + \sum_{k=p+1}^{\ell} m_k (\hat \bc_{(k)}^+, d_{{(p)}}\bc_{(p)})_D}{m_p} \hat{\by}^{m_p \bc_{(p)}^+}.
\]
Therefore, Eq. \eqref{eq:indIntermediate} reorganizes into
\[
\prod_{j = p}^{\ell} L_j^{h_j}
= \sum_{(m_p, \dots, m_\ell) \in \Z_{\geq 0}} 
\prod_{j=1}^{p-1} L_j^{\sum_{k=p}^{\ell} m_k (\hat \bc_{(k)}^+, d_{(j)} \bc_{(j)})_D}
\prod_{j = p}^{\ell} \binom{h_j + \sum_{k=j+1}^{\ell} m_k (\hat \bc_{(k)}^+, d_{(j)} \bc_{(j)})_D}{m_j}
\hat{\by}^{\sum_{j=p}^{\ell}m_j \bc_{(j)}^+}
\]
as desired. This completes our proof.
\end{proof}

Applying Lemma \ref{lem:ex3.2} to Theorem \ref{thm:main} yields the following alternative form of Gupta's formula.
\begin{theorem}\label{thm:Gupta'sFormulaV2}
Under the same conditions as Theorem \ref{thm:main}, 
\[F_{i_\ell; t_\ell}(\hat{\by}) = \sum_{(m_1, \dots, m_\ell) \in \Z_{\geq 0}} \prod_{j=1}^{\ell}
\binom{(\bg_{(\ell)}, d_{(j)} \bc_{(j)})_D + \sum_{k=j+1}^{\ell}m_k(\hat  \bc_{(k)}^+, d_{(j)} \bc_{(j)})_D}{m_j}
\hat{\by}^{\sum_{j=1}^{\ell}m_j \bc_{(j)}^+}.\]
\end{theorem}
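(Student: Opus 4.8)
The plan is to derive Theorem \ref{thm:Gupta'sFormulaV2} directly by substituting the product formula from Theorem \ref{thm:main} into Lemma \ref{lem:ex3.2}. Recall that Theorem \ref{thm:main} states $F_{i_\ell; t_\ell}(\hat{\by}) = \prod_{j=1}^{\ell} L_j^{(\bg_\ell, d_{i_j}\bc_j)_D}$, which is exactly of the form $\prod_{j=1}^\ell L_j^{h_j}$ treated in Lemma \ref{lem:ex3.2} with the specific choice of exponents $h_j = (\bg_\ell, d_{i_j}\bc_j)_D$. These exponents are integers: indeed each $(\bg_\ell, d_{i_j}\bc_j)_D = \bg_\ell^T D\, d_{i_j}\bc_j = \tilde{\bc}_j \cdot \bg_\ell$ by the remark following Theorem \ref{thm:main}, and $\tilde{\bc}_j, \bg_\ell \in \Z^n$, so the hypothesis of Lemma \ref{lem:ex3.2} is satisfied.

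First I would invoke Lemma \ref{lem:ex3.2} with $h_j := (\bg_\ell, d_{i_j}\bc_j)_D$, which immediately produces
\[
F_{i_\ell; t_\ell}(\hat{\by}) = \sum_{(m_1, \dots, m_\ell) \in \Z_{\geq 0}} \prod_{j=1}^{\ell}
\binom{(\bg_\ell, d_{i_j}\bc_j)_D + \sum_{k=j+1}^{\ell} m_k (\hat\bc_k^+, d_{i_j}\bc_j)_D}{m_j}
\hat{\by}^{\sum_{j=1}^{\ell}m_j\bc_j^+},
\]
which is precisely the claimed formula. So structurally the proof is a one-line application once the ingredients are in place.

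The only point requiring a brief comment is the convergence/well-definedness of the infinite sum over $(m_1,\dots,m_\ell)\in\Z_{\geq 0}^\ell$: one should note that the binomial coefficient $\binom{h}{m}$ vanishes for $m$ sufficiently large relative to $h$ when $h\geq 0$, but when $h<0$ (or when the correction terms $\sum_{k>j} m_k(\hat\bc_k^+, d_{i_j}\bc_j)_D$ are negative) the coefficients need not vanish, so the sum is genuinely infinite. However, the left-hand side $F_{i_\ell;t_\ell}$ is a polynomial, so the infinite sum on the right must telescope to a polynomial; this is already implicit in Lemma \ref{lem:ex3.2}, since its proof works formally with the generalized binomial theorem in the completed ring, and the equality of the two sides is an identity in $\Z[[\hat\by]]$ (equivalently in $\Z[[\y]]$ after the substitution $\hat y_i = y_i \prod_j x_j^{b_{ji}}$) that happens to have a polynomial value. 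I would add one sentence to this effect rather than re-examining convergence in detail.

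I do not anticipate any real obstacle here: Lemma \ref{lem:ex3.2} does all the combinatorial work, and Theorem \ref{thm:Gupta'sFormulaV2} is its specialization. The only thing to be careful about is bookkeeping — making sure the index ranges $\sum_{k=j+1}^\ell$ in the binomial coefficient and $\sum_{j=1}^\ell$ in the exponent of $\hat\by$ match exactly between the lemma statement and the theorem statement, and confirming that the substituted exponents $h_j$ are integers so that the lemma applies verbatim. The proof will therefore be short: state the substitution, cite Lemma \ref{lem:ex3.2}, and remark on the polynomiality of the resulting sum.
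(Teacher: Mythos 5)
Your proposal is correct and is essentially identical to the paper's own proof: the paper likewise obtains Theorem \ref{thm:Gupta'sFormulaV2} by applying Lemma \ref{lem:ex3.2} with $h_j = (\bg_\ell, d_{i_j}\bc_j)_D$ to the product formula of Theorem \ref{thm:main}. Your added remarks on the integrality of the exponents and on the formal convergence of the sum are sensible bookkeeping but do not change the argument.
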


\begin{remark} 
An expression resembling Theorem \ref{thm:Gupta'sFormulaV2} also appears in Gupta's original formulation, in particular, see Theorem 3.1 of \cite{Gupta2018}.  However one then has to use Definitions 2.16 and 2.17 of \cite{Gupta2018}, as well as translation into the language of $\mathbf{c}$- and $\mathbf{g}$-vectors, as well as algebraic manipulation of factorials, to result in the current expression. 
\end{remark}

\begin{example} We now try to reproduce the result of Example \ref{ex:B14} using Theorem \ref{thm:Gupta'sFormulaV2}. Back in Example \ref{ex:B14}, we have already done the relevant computations involved in the statement of the new formula, giving us
\[
F_{1;t_3}(\hat\by) = \sum_{(m_1, m_2, m_3) \in \Z_{\geq 0}} 
\binom{3 -m_2 - 4m_3}{m_1}
\binom{4 - 4m_3}{m_2}
\binom{1}{m_3}
\hy_1^{m_1 + m_2+3m_3}
\hy_2^{m_2+4m_3}.
\]
Since $\binom{N}{s} = 0$ when $N > 0$ and $s < 0$, or when $N > 0$ and $s > N$, we may restrict our attention to certain tuples. Table \ref{table1} captures ten tuples and their contributions to $F_{1;t_3}(\hat\by)$. For example, the first row considers all the tuples of the form $(m_1, 0, 0)$. Since the first binomial evalutes to $\binom{3}{m_1}$, there is only a nonzero contribution when $m_1 = 0, 1, 2, 3$, yielding four terms that collect into $(1+\hy_1)^3$.
\begin{table}
\centering
\[
\begin{array}{l|l|l}
m_3 & m_2 & \text{terms} \\ \hline
0 & 0 & (1+\hy_1)^3 \\  
  & 1 & 4\hy_1\hy_2(1+\hy_1)^2 \\ 
  & 2 & 6\hy_1^2\hy_2^2(1+\hy_1) \\ 
  & 3 & 4\hy_1^3\hy_2^3
\end{array}
\]
\caption{Tuples and their contributions to $F_{i;t_3}$}
\label{table1}
\end{table}
However, this table is not exhaustive. It leaves two slightly different cases that we will consider separately. When $m_3 = 0$, the second binomial evalutes to $\binom{4}{m_2}$, therefore $m_2$ may also take the value $m_2 = 4$. The contribution of tuples of the form $(m_1, 4, 0)$ is 
\begin{equation}\label{eq:ex40}
\hy_1^4\hy_2^4 \sum_{m_1 \geq 0}\binom{-1}{m_1}\hy_1^{m_1}
= \hy_1^4\hy_2^4 \sum_{m_1 \geq 0}(-1)^{m_1}\hy_1^{m_1}.
\end{equation}
At this point we have considered all tuples where $m_3 = 0$. If $m_3 = 1$, in order for the second binomial to be nonzero, we must have $m_2 = 0$, and the contribution of tuples of the form $(m_1, 0, 1)$ is 
\begin{equation}\label{eq:ex01}
\hy_1^3\hy_2^4 \sum_{m_1 \geq 0}\binom{-1}{m_1}\hy_1^{m_1}
= \hy_1^3\hy_2^4 \sum_{m_1 \geq 0}(-1)^{m_1}\hy_1^{m_1}.
\end{equation}
Summing \eqref{eq:ex40} and \eqref{eq:ex01} leaves a single term $\hy_1^3\hy_2^4$. Since $m_3$ can only take the values $0$ or $1$, we have now finished enumerating all tuples with a nonzero contribution to the formula, allowing us to conclude that
\[
F_{1;t_3}(\hat\by) = (1+\hy_1)^3 + 4\hy_1\hy_2(1+\hy_1)^2 + 6\hy_1^2\hy_2^2(1+\hy_1) + 4\hy_1^3\hy_2^3 + \hy_1^3\hy_2^4,
\]
which agrees with Eq. \eqref{eq:F3expanded}.
\end{example}

\printbibliography

\end{document}